\newtheorem{theorem}{Theorem}[section]
\newtheorem{lemma}[theorem]{Lemma}
\newtheorem{algorithm}[theorem]{Algorithm}
\theoremstyle{definition}
\newtheorem{defn}[theorem]{Definition}
\newtheorem{remark}[theorem]{Remark}
\numberwithin{equation}{section}
\newcommand{\FF}{\mathbb{F}}
\newcommand{\PP}{\mathbf{P}}
\newcommand{\calM}{\mathcal{M}}
\newcommand{\calO}{\mathcal{O}}
\DeclareMathOperator{\Aut}{Aut}
\DeclareMathOperator{\End}{End}
\DeclareMathOperator{\Ext}{Ext}
\DeclareMathOperator{\GL}{GL}
\DeclareMathOperator{\Gr}{Gr}
\DeclareMathOperator{\Hom}{Hom}
\DeclareMathOperator{\OG}{OG}
\DeclareMathOperator{\Proj}{Proj}
\DeclareMathOperator{\SO}{SO}
\DeclareMathOperator{\SpG}{SpG}
\DeclareMathOperator{\Sym}{Sym}
\newcommand{\avlink}[1]{\href{http://www.lmfdb.org/Variety/Abelian/Fq/#1}{\textsf{#1}}}
\newcommand{\Gap}{\textsc{GAP}}
\newcommand{\Magma}{\textsc{Magma}}
\newcommand{\SageMath}{\textsc{SageMath}}
\newcommand{\Singular}{\textsc{Singular}}
\newcommand{\arXiv}[3]{\href{https://arxiv.org/abs/#1}{arXiv:#1v#2} (#3)}
\begin{document}

\title{The relative class number one problem for function fields, III}
\author{Kiran S. Kedlaya}
\thanks{Thanks to Samir Canning, John Cremona, Xander Faber, Yongyuan Huang, Joan-Carles Lario, Jun Bo Lau, Bjorn Poonen, and David Roe for helpful discussions. The author was supported by NSF (grants DMS-1802161, DMS-2053473) and UC San Diego (Warschawski Professorship).}

\date{28 Dec 2023}

\begin{abstract}
We complete the solution of the relative class number one problem for function fields of curves over finite fields. Using work from two earlier papers,
this reduces to finding all function fields of genus 6 or 7 over $\FF_2$ with one of 40 prescribed Weil polynomials; one may then verify directly that three of these fields admit an everywhere unramified quadratic extension with trivial relative class group.
The search is carried out by carefully enumerating curves based on the Brill--Noether stratification of the moduli spaces of curves in these genera, and particularly Mukai's descriptions of the open strata.
\end{abstract}

\maketitle

\section{Introduction}

This paper continues and concludes the work done in \cite{part1, part2} on the \emph{relative class number one problem} for function fields of curves over finite fields (hereafter simply ``function fields''), building upon work of Leitzel--Madan \cite{leitzel-madan} and Leitzel--Madan--Queen \cite{leitzel-madan-queen}.
That is, we seek to identify finite extensions $F'/F$ of function fields for which the two class numbers are equal. 

To state the main result, we recall some context from the introduction of \cite{part1}. 
Given a finite extension $F'/F$ of function fields, we write
$C, C'$ for the curves corresponding to $F,F'$;
$q_F, q_{F'}$ for the orders of the base fields of $C,C'$;
$g_F, g_{F'}$ for the genera of $C, C'$;
and $h_F, h_{F'}$ for the class numbers of $F, F'$. 
Since the relative class number $h_{F'/F} = h_{F'}/h_F$ is an integer
(it is the order of the Prym variety of the covering $C' \to C$), the relative class number one problem reduces to the cases where $g_{F'} = g_F$ (a \emph{constant extension}) and where $q_F = q_{F'}$ (a \emph{purely geometric extension}).
Excluding the trivial cases of a constant extension of genus-0 function field and an extension with $F' \cong F$, one has the following result (see \cite{part1} for the tables).

\begin{theorem}[Solution of the relative class number one problem] \label{T:classification}
Let $F'/F$ be an extension of function fields of degree $d>1$ of relative class number $1$.
\begin{enumerate}
\item[(a)]
If $F'/F$ is constant and $g_F > 0$, then $q_F, d, g_F$, and the isogeny class of $J(C)$ appear in
\cite[Theorem~1.1]{part1}. In particular,
\[
(q_F, d, g_F) \in \{(2,2,1), (2,2,2), (2,2,3), (2,3,1), (3,2,1), (4,2,1)\}.
\]
\item[(b)]
If $F'/F$ is purely geometric, $g_F \leq 1$, and $g_{F'} > g_F$, then $q_F, g_F, g_{F'}$, and the isogeny classes of $J(C)$ and $J(C')$ appear in \cite[Table~3]{part1}. In particular,
\begin{gather*}
(q_F, g_F, g_{F'}) \in \{(2,0,1\mbox{--}4), 
(2,1,2\mbox{--}6), (3,0,1), (3,1,2), (3,1,3), (4,0,1), (4,1,2)\}.
\end{gather*}
\item[(c)]
If $F'/F$ is purely geometric, $g_F > 1$, and $q_F > 2$, then $d, g_F, g_{F'}, F$ appear in \cite[Table~4]{part1}. In particular,
\begin{gather*}
(q_F, d, g_F, g_{F'}) \in \{(3,2,2,3), (3,2,2,4), (3,2,3,5), (3,3,2,4), (4,2,2,3), (4,3,2,4)\}.
\end{gather*}
\item[(d)]
If $F'/F$ is purely geometric, $g_F > 1$, $q_F = 2$, and $d > 2$, then $d, g_F, g_{F'}, F$ appear in \cite[Table~5]{part1}. In particular,
\begin{gather*}
(d, g_F, g_{F'}) \in \{(3,2,4), (3,2,6), (3,3,7), (3,4,10), (4,2,5), (5,2,6), (7,2,8)\}.
\end{gather*}
\item[(e)] 
If $F'/F$ is purely geometric, $g_F > 1$, $q_F = 2$, and $d = 2$, then $g_F, g_{F'}, F$ appear in \cite[Table~6]{part1}. In particular,
\begin{gather*}
(g_F, g_{F'}) \in \{(2,3), (2,4), (2,5), (3,5), (3,6), (4,7), (4,8), (5,9), (6,11), (7,13)\}.
\end{gather*}
\item[(f)]
If $F'/F$ is neither constant nor purely geometric and $g_{F'} > g_F$, then $q_F = 2, q_{F'} = 4$, and $(g_F, g_{F'}, J(C), J(C'))$ is one of $(0,1,0, \avlink{1.4.ae})$ or $(1,2,\avlink{1.2.c}, \avlink{2.4.ae\_i})$ (using LMFDB labels to represent isogeny classes of abelian varieties).
\end{enumerate}
\end{theorem}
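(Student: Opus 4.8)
\emph{Proof strategy.} The numerical skeleton of the classification has already been assembled in \cite{part1, part2}: together with the tables cited in the statement, this reduces everything to one finite problem, namely to determine all function fields $F$ of genus $6$ or $7$ over $\FF_2$ whose curve $C$ has one of the $40$ prescribed Weil polynomials, and then to decide, for each such $C$, whether some everywhere-unramified quadratic extension $C'\to C$ has relative class number $1$ --- equivalently (since $h_{F'/F}$ is the order of the Prym variety) whether the Prym variety $P$ of $C'/C$ satisfies $\#P(\FF_2)=1$. The plan is to proceed in two stages: first enumerate all smooth curves of genus $6$ and $7$ over $\FF_2$ --- or at least all those realizing one of the $40$ Weil polynomials --- and then, for the finitely many survivors, construct the unramified double covers and compute their Pryms.

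For the enumeration I would stratify $\calM_6$ and $\calM_7$ by Clifford index, handling the open (maximal Clifford index) stratum through Mukai's models. In genus $6$, a canonical curve that is neither trigonal, a plane quintic, nor bielliptic lies on a quintic del Pezzo surface $S\subset\PP^5$ and is cut out by a quadric, so $C\in|{-2K_S}|\cong\PP^{15}$; one runs over the finitely many forms of $S$ over $\FF_2$ and, for each, over the quadric sections modulo $\Aut_{\FF_2}(S)$, keeping the smooth ones. In genus $7$, a canonical curve of maximal Clifford index $3$ --- neither trigonal, tetragonal, bielliptic, nor a singular plane sextic --- is a transverse linear section $X\cap\PP^6$ of the ten-dimensional spinor variety $X=\OG(5,10)\subset\PP^{15}$, unique up to the action of $\mathrm{Spin}_{10}$; one enumerates the relevant points of $\Gr(7,16)$ over $\FF_2$ modulo this group (and over both nondegenerate quadratic forms in dimension $10$), keeping the smooth sections. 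The remaining strata --- hyperelliptic curves, trigonal curves, plane quintics, tetragonal curves of genus $7$, bielliptic curves, curves carrying a $g^2_6$ --- all admit low-dimensional explicit parametrizations (divisors on Hirzebruch surfaces, plane curves modulo $\PGL_3$, double covers of curves of smaller genus, and so on) and can be swept directly; the delicate point is to check that the stratification leaves no gaps, in particular at the boundaries. Whenever a curve is produced, membership in the list of $40$ is tested by computing $\#C(\FF_{2^k})$ for $k\le g$ and comparing.

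The second stage is essentially bookkeeping. Everywhere-unramified quadratic extensions of $F$ correspond to index-$2$ subgroups of the divisor class group $J(C)(\FF_2)$, equivalently to nonzero elements of $J(C)(\FF_2)/2J(C)(\FF_2)$; for each one I would write down the associated double cover $C'\to C$, count $\#C'(\FF_{2^k})$, and read off $\#P(\FF_2)$, which is the relative class number. Imposing $\#P(\FF_2)=1$ leaves exactly three fields $F$; these furnish the entries of Theorem~\ref{T:classification}(e) with $g_F\in\{6,7\}$ (necessarily among the pairs $(g_F,g_{F'})=(6,11)$ and $(7,13)$ listed there) that are not already produced in \cite{part1, part2}, and together with parts (a)--(f) as tabulated in \cite{part1} this completes the classification.

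The principal obstacle is the genus-$7$ enumeration. The space $\Gr(7,16)$ of linear sections of $\OG(5,10)$ has dimension $63$, so a brute-force sweep over $\FF_2$ is infeasible; instead one must build the $7$-plane one step at a time, exploiting stabilizers of partial flags under $\mathrm{Spin}_{10}(\FF_2)$, and argue carefully that no $\FF_2$-isomorphism class of Clifford-index-$3$ genus-$7$ curve is missed (and, for efficiency, that orbits are not recomputed excessively). A secondary but real difficulty is certifying, for each curve produced, both smoothness and the correct Weil polynomial: over a field as small as $\FF_2$ a curve may carry too few low-degree points to pin down its zeta function, so the point counts must be pushed to degree $g$ and cross-checked against the functional equation and the Weil bounds.
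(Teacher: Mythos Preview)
Your overall strategy matches the paper's: reduce via \cite{part1,part2} to a search for genus-$6$ and genus-$7$ curves over $\FF_2$ with $40$ prescribed Weil polynomials, stratify $\calM_6$ and $\calM_7$ along Brill--Noether lines, use Mukai's homogeneous-space models for the open strata, and finish by enumerating \'etale double covers in \textsc{Magma}. Two points of divergence are worth flagging.

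First, and most importantly, the paper resolves your ``principal obstacle'' not by enumerating partial flags in $\Gr(7,16)$ but by turning the problem around: since the target Weil polynomials pin down $\#C(\FF_2)\in\{6,7\}$ (respectively $\{4,5,6\}$ in genus~$6$), one enumerates $G$-orbits of \emph{subsets} of $\OG^+(\FF_2)$ (respectively $\Gr(2,5)(\FF_2)$) of that size, then solves linear conditions to find the sections passing through exactly those $\FF_2$-points. This is drastically smaller than walking a flag variety, and the paper develops a dedicated ``orbit lookup tree'' mechanism (Appendix~A) to compute such subset-orbits efficiently. Your flag-based plan is not wrong in principle, but without this point-set trick it is not clear the computation terminates in reasonable time.

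Second, two small corrections: only the \emph{split} quadratic form in dimension~$10$ is needed---the paper observes that passing to an odd-degree extension where $C$ acquires a rational point forces the form on $H^0(E)$ to admit Lagrangians over $\FF_2$, so the non-split form never occurs; and \'etale double covers of $C$ are parametrized by the $2$-torsion $J(C)[2](\FF_2)$ rather than by $J(C)(\FF_2)/2J(C)(\FF_2)$ (these have the same order, so your enumeration would still find them all, but the identification is not the one you wrote).
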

This statement is covered by \cite[Theorems~1.1, 1.2, 1.3]{part1} except for the following points.
\begin{itemize}
\item
Part (b) requires classifying curves of genus 6 over $\FF_2$ with one particular Weil polynomial. It is shown in \cite[Lemma~10.2]{part2} that there is a unique such curve.
\item
Part (d) requires showing that when $q_F = 2$, the extension $F'/F$ is cyclic. This is done in
\cite[Theorem~1.1]{part2} using constraints on the Weil polynomials found in \cite{part1}.
\item
Part (e) requires finding all curves of genus 6 and 7 over $\FF_2$ with Weil polynomials in a specific list of 40 entries found in \cite{part1} (see Table~\ref{table:genus 567 counts}). This brings us to the main result of the present paper, stated as
Theorem~\ref{T:main} below.
\end{itemize}

\begin{theorem} \label{T:main}
The following statements hold.
\begin{enumerate}
\item[(a)]
There are two isomorphism classes of curves $C$ of genus $6$
 over $\FF_2$ admitting
an \'etale double covering $C' \to C$ such that $\#J(C')(\FF_2) = \#J(C)(\FF_2)$.
The curves $C$ are Brill--Noether general with automorphism groups $C_3$ and $C_5$.
\item[(b)]
There is a unique isomorphism class of curves $C$ of genus $7$
 over $\FF_2$ admitting
an \'etale double covering $C' \to C$ such that $\#J(C')(\FF_2) = \#J(C)(\FF_2)$.
The curve $C$ is bielliptic with automorphism group $D_6$.
\end{enumerate}
\end{theorem}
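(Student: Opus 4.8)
The plan is to turn the statement into a finite enumeration and then carry it out. By the theory of Prym varieties, an \'etale double covering $C' \to C$ satisfies $\#J(C')(\FF_2) = \#J(C)(\FF_2)$ exactly when the Prym variety of $C'/C$ has a unique $\FF_2$-point; together with the constraints on Weil polynomials extracted in \cite{part1} (collected in Table~\ref{table:genus 567 counts}), this forces the Weil polynomial of $C$ to be one of the $40$ explicit polynomials there and simultaneously determines the Weil polynomial of the partner $C'$. A connected everywhere-unramified double covering of $C$ over $\FF_2$ is separable, hence Artin--Schreier, and there are at most $2^{g}-1$ of them (the count being governed by the $2$-rank of $J(C)$). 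So it suffices to (i) list every curve $C$ of genus $6$ or $7$ over $\FF_2$ whose Weil polynomial appears in the table, and then (ii) for each such $C$, run through its finitely many everywhere-unramified Artin--Schreier double covers $C'$, compute the zeta function of each, and retain those matching the partner Weil polynomial. Step (ii) is a mechanical finite check once (i) is done, so the substance is (i).

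To carry out (i) I would stratify $\calM_g(\FF_2)$ for $g \in \{6,7\}$ by gonality and the Brill--Noether behaviour of special linear systems, handling the generic (open) stratum via Mukai's structure theorems. For $g = 6$ a Brill--Noether--general canonical curve $C \subset \mathbb{P}^5$ lies on a quintic del Pezzo surface $S$, realized as a codimension-$4$ linear section of $\Gr(2,5)$ in its Pl\"ucker embedding, with $C \in |{-2K_S}| = |\calO_S(2)| \cong \mathbb{P}^{15}$; over $\FF_2$ there are only finitely many forms of $S$ (indexed by conjugacy classes in $S_5$), so for each form one sweeps $|{-2K_S}|$ modulo $\Aut_{\FF_2}(S)$, discards reducible or singular members, computes the zeta function of each smooth member, and keeps the matches. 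For $g = 7$ a Brill--Noether--general canonical curve $C \subset \mathbb{P}^6$ is a linear section $\Sigma \cap \Lambda$ of the spinor tenfold $\Sigma$ (a component of $\OG(5,10)$ in its half-spin embedding in $\mathbb{P}^{15}$) with $\Lambda \cong \mathbb{P}^6$; here the parameter space of admissible $\Lambda$ is the Grassmannian of $7$-planes in $\FF_2^{16}$, far too large for a direct sweep, so one prunes using the point counts forced by the target Weil polynomials: knowing $\#C(\FF_{2^k})$ for all small $k$ pins down the low-degree points of $C$, which produce points and isotropic flags in $\Sigma$, and transitivity of $\SO_{10}(\FF_2)$ on isotropic flags of the ambient quadratic form normalizes $\Lambda$ down to a small residual enumeration.

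The remaining strata are finite in number, of lower dimension, and each carries a classical explicit model I would enumerate directly (overlaps between strata are harmless): trigonal curves sit on a rational normal scroll $\mathbb{F}_e \subset \mathbb{P}^{g-1}$ as a divisor in a fixed class, finitely many $(e,\text{class})$ each contributing a finite-dimensional linear system; curves with a low-degree plane model are smooth plane quintics (for $g=6$) or nodal plane sextics (with $4$ resp.\ $3$ nodes for $g=6$ resp.\ $7$), enumerated by sweeping the appropriate linear system modulo $\PGL_3(\FF_2)$, fixing the finitely many Galois-stable singular schemes in the nodal case; bielliptic curves are double covers $C \to E$ of one of the finitely many elliptic curves over $\FF_2$, which in characteristic $2$ are Artin--Schreier extensions $y^2+y=f$ with $f \in \FF_2(E)$ of bounded polar type arranged so that $g(C)\in\{6,7\}$, again a finite list modulo $\wp(\FF_2(E))$ and $\Aut(E)$; hyperelliptic and, for $g=7$, tetragonal curves are treated the same way. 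Running (i) across all strata and then (ii) on each survivor should leave exactly the curves asserted, and computing the automorphism group of each (the group $C_3$ or $C_5$ in genus $6$, the group $D_6$ in genus $7$) is then routine.

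The main obstacle I anticipate is the genus-$7$ Brill--Noether--general search: the parameter space of linear sections of the spinor tenfold is enormous, so the computation only becomes feasible after exploiting the full symmetry group together with the a priori point counts to prune aggressively, and one must argue carefully that the pruning loses nothing. A secondary but genuine difficulty is the characteristic-$2$ bookkeeping on the special strata --- verifying that the Artin--Schreier and wild-ramification parametrizations of bielliptic and trigonal curves account for \emph{every} such curve over $\FF_2$, including non-ordinary ones --- together with confirming that Mukai's structure theory, developed classically over $\mathbb{C}$, applies verbatim to the open stratum in characteristic $2$. Since the positive genus-$7$ example turns out to be bielliptic, the characteristic-$2$ analysis of that stratum is essential rather than a formality.
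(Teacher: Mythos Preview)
Your plan is correct and follows the same overall strategy as the paper: restrict to the $40$ Weil polynomials from \cite{part1}, stratify $\calM_g$ via the Brill--Noether/Mukai classification, exhaust each stratum, and then check the \'etale double covers of the survivors. The implementation details differ in a few places worth noting. For the generic genus-$6$ stratum you parametrize via the finitely many $\FF_2$-forms of the quintic del Pezzo and sweep $|{-2K_S}|$; the paper instead works directly inside $\Gr(2,5)\subset\PP^9$, enumerating $G$-orbits of $4$-tuples of hyperplanes (yielding $55$ linear sections) and then quadrics---same objects, different bookkeeping. For genus $7$ with a $g^2_6$ you propose nodal plane sextics, whereas the paper uses Mukai's finer trichotomy (self-adjoint $g^2_6$ in $\PP(1{:}1{:}1{:}2)$; a rational or irrational pair of $g^2_6$'s in $\PP^2\times\PP^2$ or its quadratic twist) plus a separate ``tetragonal, no $g^2_6$'' stratum in $\PP^1\times\PP^3$; this finer split keeps each ambient search small. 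The paper also eliminates several strata (hyperelliptic in both genera, trigonal of Maroni invariant $3$, bielliptic in genus $6$) with no enumeration at all, purely from the point counts in Table~\ref{table:genus 567 counts} and the resultant criterion, and it introduces a systematic ``orbit lookup tree'' data structure to handle the $G$-orbit enumerations uniformly. Finally, for the genus-$7$ generic stratum the paper's pruning is slightly different from what you sketch: rather than normalizing isotropic flags, it builds orbits of $6$-element subsets of $\OG^+(\FF_2)$ subject to incidence constraints coming from Lemmas~\ref{L:section to canonical genus 7} and~\ref{L:no large intersection}, then extends to $6$-planes by hashing. Your concerns about characteristic-$2$ validity of Mukai's theorems are addressed in the paper by reproving Theorems~\ref{T:genus 6 strat} and~\ref{T:Mukai class} over an arbitrary finite base field.
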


As in \cite{part1}, given a candidate for $C$ it is straightforward to use \Magma{} to generate all of the \'etale double coverings $C' \to C$;
thus the main computational issue is to ``invert the Weil polynomial function'' on the output values indicated in Table~\ref{table:genus 567 counts}.
Unfortunately, the Weil polynomial function is in some sense a ``secure hash function'', in that its value generally does not reveal much useful information about the input. Examples where one can invert the function are often of some extremal nature, as in Lauter's approach to bounding the maximum number of points on a curve of fixed genus over a fixed finite field: one first enumerates the Weil polynomials consistent with a given point count, then attempts to rule in or out the various candidates. Much work has been done on the second step by Howe; see \cite{howe2021} for a recent survey of this problem.

Unfortunately, the techniques described in \cite{howe2021} do not seem to be applicable to the cases relevant to Theorem~\ref{T:main}. Fortunately, for genera up to 7 it is feasible to deploy a brute force strategy, i.e., to enumerate a collection of schemes known to include all curves with the given Weil polynomials and then filter through the results. One way to build such a collection is using singular plane curves; see \cite{faber-grantham} and \cite{faber-grantham-howe} for recent examples of this approach.

Here we take an alternate approach that accounts for the known geometry of moduli spaces of curves based on Petri's analysis of linear systems (see \S\ref{sec:canonical curves}). This amounts to a natural extension of the computation of the set of isomorphism classes of curves of genus $g$ over $\FF_2$ for $g=4$ by Xarles \cite{xarles}, based on the fact that a general canonical curve of genus 4 is a complete intersection of type $(2) \cap (3)$ in $\PP^3$;
and $g=5$ by Dragutinovi\'c \cite{dragutinovic}, based on the fact that a general canonical curve of genus 5 is a complete intersection of type $(2) \cap (2) \cap (2)$ in $\PP^4$.

While one cannot hope to give similar such descriptions in arbitary genus (see Remark~\ref{rmk:Brill-Noether}), they are available in genus 6 and 7 by work of Mukai \cite{mukai-cg, mukai}, although some care is required to use them over a nonclosed base field. As in \cite{faber-grantham} and \cite{faber-grantham-howe}, we short-circuit the searches using the Weil polynomial constraints, especially the number of $\FF_2$-rational points.
See Lemma~\ref{L:main computation} for more detailed internal references.

One technical innovation introduced along the way (see Appendix~\ref{sec:orbit lookup}) is a lightweight method for computing orbits of the action of a group $G$ on subsets of a set carrying a $G$-action; for instance, in the generic genus-7 case we compute orbits of 6-element sets of $\FF_2$-rational points on the 10-dimensional orthogonal Grassmannian. This construction may be of independent interest for other applications, including extending the tabulation of genus-$g$ curves over $\FF_2$ to a few larger values of $g$ for which the Brill--Noether stratification on moduli can again be made explicit (see Remark~\ref{rmk:Brill-Noether}), or finding supersingular genus-$g$ curves over $\FF_2$ for $g$ in a similar range. See \S\ref{sec:full census} for more discussion of the relevant issues.

As in \cite{part1} and \cite{part2}, the arguments depend on a number of computations in \SageMath{} \cite{sage} and \Magma{} \cite{sage};
the computations take about 8 hours on a single CPU (Intel i5-1135G7@2.40GHz) and can be reproduced using some
Jupyter notebooks found in the repository \cite{repo}. (Some functionality used in \SageMath{} is derived from \Gap{} \cite{gap} and \Singular{} \cite{singular}.)

\section{The structure of canonical curves}
\label{sec:canonical curves}

Let $C$ be a curve of genus $g$ over a \emph{finite} field $k$.
Let $\overline{k}$ be an algebraic closure of $C$.
We collect here a few facts about the geometry of $C$ that will be used frequently, and often without comment, in what follows. See \cite{saint-donat} for a characteristic-free treatment of much of this material (and \cite[\S 4.3]{griffiths-harris} for some additional details).

A \emph{$g^r_d$} on $C$ is a line bundle of degree $d$ whose space of global sections has dimension $r+1$; if such a bundle is basepoint-free, then it defines a degree-$d$ map $C \to \PP^r$. (If the bundle is not basepoint-free, then the global sections generate a basepoint-free subbundle of degree strictly less than $d$.)
Since $k$ is finite, every Galois-invariant divisor class on $C$ contains a $k$-rational divisor (see for example \cite[Remark~2.4]{castryck-tuitman}). Consequently, if $C_{\overline{k}}$ admits a \emph{unique} $g^r_d$ for some $r,d$, then so does $C$.

The Castelnuovo--Severi inequality (see for example \cite[Theorem~3.11.3]{stichtenoth})
asserts that if there exist curves $C_1, C_2$ of genera $g_1, g_2$ and morphisms $f_1\colon C \to C_1, f_2 \colon C \to C_2$ of degrees $d_1, d_2$ such that $k(C)$ is the compositum of $k(C_1)$ and $k(C_2)$ over $k$, then
\[
g \leq d_1 g_1 + d_2 g_2 + (d_1 - 1)(d_2 - 1).
\]
We will use this bound to ensure that certain low-degree maps out of $C$ occur in isolation.

We say that $C$ is \emph{hyperelliptic} if $C$ admits a $g^1_2$ (which is automatically basepoint-free if $g > 0$). By Castelnuovo--Severi, if $g > 1$ then $C_{\overline{k}}$ can admit only one $g^1_2$; consequently, $C$ is hyperelliptic if and only if $C_{\overline{k}}$ is hyperelliptic.
Let $\iota\colon C \to \PP^{g-1}_k$ be the \emph{canonical morphism}, defined by the canonical linear system; then $\iota$ is a degree-2 map onto a rational normal curve if $C$ is hyperelliptic and an embedding otherwise. By abuse of language, $\iota$ is commonly called the \emph{canonical embedding} even when $C$ is allowed to be  hyperelliptic.

For $g > 4$, we say that $C$ is \emph{trigonal} if it admits a $g^1_3$ but not a $g^1_2$ (so the former is necessarily basepoint-free). By Castelnuovo--Severi again,  $C_{\overline{k}}$ can admit only one $g^1_3$; consequently, $C$ is trigonal if and only if $C_{\overline{k}}$ is trigonal. By Petri's theorem (a/k/a the Max Noether--Enriques--Petri theorem), if $C$ is not trigonal or a smooth plane quintic (when $g=6$), then $\iota(C)$ is cut out by quadrics.

By contrast, for $C$ trigonal, the linear system of quadrics containing $\iota(C)$ cuts out a rational normal scroll; the latter is isomorphic to the Hirzebruch surface
\[
\mathbf{F}_n := \Proj_{\PP^1_k}(\calO_{\PP^1_k} \oplus \calO(n)_{\PP^1_k})
\]
for a certain integer $n \geq 0$ called the \emph{Maroni invariant}\footnote{Here we follow the terminology of \cite{saint-donat}. The original definition of Maroni \cite{maroni} follows a different numbering convention which is also commonly used.} of $C$.
The structure map $\mathbf{F}_n \to \PP^1_k$, whose fibers form a ruling of $\mathbf{F}_n$, restricts to the trigonal projection $\pi\colon C \to \PP^1_k$.
\begin{itemize}
\item
For $n>0$, $\mathbf{F}_n$ is isomorphic to an $(n,1)$-hypersurface in $\PP^1_k \times_k \PP^2_k$.
Let $b$ be the unique irreducible curve in $\mathbf{F}_n$ with negative self-intersection (the \emph{directrix})
and let $f$ be a fiber of the ruling; then 
\begin{equation} \label{eq:Fn intersections}
b^2 = -n, \qquad b\cdot f = 1, \qquad f^2 = 0,
\end{equation}
and blowing down $\mathbf{F}_n$ along $b$ yields the weighted projective space $\PP(1:1:n)_k$.
Of the linear systems
\begin{equation} \label{eq:Fn linear systems}
|3b + \tfrac{g + 3n+2}{2} f|, \qquad |b + \tfrac{g + n - 2}{2}f|, \qquad |{-2}b+(-n-2)f|,
\end{equation}
the first contains $C$, the second defines the embedding $\mathbf{F}_n \to \PP^{g-1}_k$, and the third is the canonical linear system.

\item
For $n=0$, we have $\mathbf{F}_{n,\overline{k}} \cong \PP^1_{\overline{k}} \times_{\overline{k}} \PP^1_{\overline{k}}$.
Let $b$ and $f$ be fibers of the two different rulings;
then \eqref{eq:Fn intersections} and the interpretation of \eqref{eq:Fn linear systems} remain valid.
Since $3 \neq \frac{g+2}{2}$, the symmetry of the two rulings is broken by $C$ and so everything descends from $\overline{k}$ to $k$.
\end{itemize}
Since $C$ and $b$ are effective, 
 $0 \leq b \cdot C = -3n +\frac{g+3n+2}{2}$, so
\[
0 \leq n  \leq \left\lfloor \tfrac{g+2}{3} \right\rfloor, \qquad n \equiv g \pmod{2}.
\]
In case $n = \tfrac{g+2}{3}$, we have $b \cdot C = 0$ and so $C$ also embeds into $\PP(1:1:n)$.

We say that $C$ is \emph{bielliptic} if it admits a degree-2 map to a genus-1 curve over $k$.
By Castelnuovo--Severi once more, if $g > 5$ then $C_{\overline{k}}$ can admit at most one such map;
consequently, $C$ is bielliptic if and only if $C_{\overline{k}}$ is bielliptic.

\section{Brill--Noether stratifications}

We now specialize the previous discussion to the genera of direct concern here, following Mukai. We use the conventions that the Grassmannian $\Gr(r, V)$ parametrizes subspaces of dimension $r$ of a specified vector space $V$ and that the Pl\"ucker embedding maps into $\PP(\wedge^r V^\vee)$.

\begin{theorem} \label{T:genus 6 strat}
Let $C$ be a curve of genus $6$ over a finite field $k$. Then one of the following holds.
\begin{enumerate}
\item
The curve $C$ is hyperelliptic.
\item
The curve $C$ is trigonal of Maroni invariant $2$. In this case, $C$ occurs as a complete intersection of type $(2,1) \cap (1,3)$ in $\PP^1_k \times_k \PP^2_k$, where the $(2,1)$-hypersurface is isomorphic to $\mathbf{F}_2$.
\item
The curve $C$ is trigonal of Maroni invariant $0$. In this case, $C$ occurs as a curve of bidegree $(3,4)$ in $\PP^1_k \times_k \PP^1_k$.
\item
The curve $C$ is bielliptic.
\item
The curve $C$ occurs as a smooth quintic curve in $\PP^2_k$.
\item
The curve $C$ occurs as a transverse intersection of four hyperplanes, a quadric hypersurface, 
and the $6$-dimensional Grassmannian $\Gr(2,5)$ in $\PP^9_k$.
\end{enumerate}
\end{theorem}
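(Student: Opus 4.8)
The plan is to run through the possible special linear systems on $C$ in a fixed order, peeling off cases~(1), then~(2)/(3), then~(5), then~(4), and reaching~(6) as the default. If $C$ is hyperelliptic we are in case~(1); otherwise $\iota\colon C\hookrightarrow\PP^5_k$ is a closed embedding. Suppose $C$ is trigonal. Then, as in \S\ref{sec:canonical curves} with $g=6$, the Maroni invariant $n$ satisfies $0\le n\le\lfloor 8/3\rfloor=2$ and $n\equiv 0\pmod 2$, so $n\in\{0,2\}$; the scroll $\mathbf{F}_n$ together with the linear systems \eqref{eq:Fn linear systems} then exhibit $C$ as in case~(2) for $n=2$ (after matching the relevant divisor classes on $\mathbf{F}_2\subset\PP^1_k\times_k\PP^2_k$, so that $\mathbf{F}_2$ appears as the bidegree-$(2,1)$ hypersurface and $C$ is cut on it by a bidegree-$(1,3)$ hypersurface) and as in case~(3) for $n=0$, where $\mathbf{F}_0=\PP^1_k\times_k\PP^1_k$ and $C$ has bidegree $(3,4)$. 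So assume $C$ is neither hyperelliptic nor trigonal. If $C$ is a smooth plane quintic we are in case~(5); conversely a non-hyperelliptic curve of genus $6$ carrying a $g^2_5$ must be a smooth plane quintic (if the $g^2_5$ has a base point then $C$ is hyperelliptic, and otherwise the $g^2_5$ is birational onto a plane quintic of arithmetic genus $6$, which is therefore smooth), and since such a $g^2_5$ is classically unique on $C_{\overline{k}}$ it descends to $k$. So assume in addition that $C$ is not a plane quintic.

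Now $\iota(C)\subset\PP^5_k$ is projectively normal (as $C$ is not hyperelliptic) and, by the Max Noether--Enriques--Petri theorem recalled in \S\ref{sec:canonical curves}, cut out by quadrics; hence the space $V$ of quadrics through $\iota(C)$ has dimension $\binom{7}{2}-(3\cdot 6-3)=6$. If $C$ is bielliptic we are in case~(4), which descends from $\overline{k}$ by Castelnuovo--Severi as in \S\ref{sec:canonical curves}; so assume $C$ is not bielliptic, and we aim for case~(6). Over $\overline{k}$ this is Mukai's description \cite{mukai-cg}: Brill--Noether existence (here $\rho=0$) gives a $g^2_6$ on $C_{\overline{k}}$, which under the running hypotheses is basepoint-free and birational onto a plane sextic with four double points---any failure of this would put us in an already-excluded case---so that blowing up $\PP^2_{\overline{k}}$ at these points yields a (possibly mildly singular) quintic del Pezzo surface $S_{\overline{k}}$, smooth along $\iota(C_{\overline{k}})$, with $\iota(C_{\overline{k}})\in|{-2K_{S_{\overline{k}}}}|$. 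By adjunction $\iota$ is the restriction of the anticanonical embedding of $S_{\overline{k}}$, so $\iota(C_{\overline{k}})\subset S_{\overline{k}}\subset\PP^5_{\overline{k}}$ with $S_{\overline{k}}$ cut out by a $5$-dimensional subspace $W_{\overline{k}}\subset V_{\overline{k}}$.

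The descent to $k$ hinges on $S_{\overline{k}}$ being canonical. First, it does not depend on the choice of $g^2_6$: the several $g^2_6$ on $C_{\overline{k}}$ are precisely the restrictions to $C_{\overline{k}}$ of the distinct blow-down morphisms $S_{\overline{k}}\to\PP^2_{\overline{k}}$. Moreover $S_{\overline{k}}$ is the \emph{unique} quintic del Pezzo surface through $\iota(C_{\overline{k}})$---equivalently, $W_{\overline{k}}$ is the unique $5$-dimensional subspace of $V_{\overline{k}}$ whose base locus is a surface rather than a curve. This uniqueness makes $S_{\overline{k}}$ Galois-stable, so by descent it is defined over $k$: there is a quintic del Pezzo $S\subset\PP^5_k$, smooth along $\iota(C)$, with $S_{\overline{k}}$ as base change, and $W:=H^0(\PP^5_k,I_S(2))\subset V$ is a $k$-rational $5$-dimensional subspace. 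Choosing any $Q_0\in V\setminus W$ defined over $k$, the splitting $V=W\oplus kQ_0$ gives $\iota(C)=\bigcap_{Q\in V}Q=S\cap Q_0$ scheme-theoretically. Finally, a quintic del Pezzo surface over $k$ has a canonically attached $5$-dimensional $k$-vector space $U$ realizing it as a linear section $\Gr(2,U)\cap\PP^5$ (over $\overline{k}$ every quintic del Pezzo is such a section, and the reconstruction of $U$ from the surface is functorial, hence Galois-equivariant); fixing an isomorphism $U\cong k^5$ identifies this with $\Gr(2,5)$ cut by four hyperplanes in $\PP^9_k$. Combining the above, $\iota(C)=\Gr(2,5)\cap\PP^5\cap Q_0$ inside $\PP^9_k$, and this intersection is transverse along $\iota(C)$: $S$ is smooth of dimension $2$ there, so the four hyperplanes meet $\Gr(2,5)$ transversally along $\iota(C)$, while $Q_0$ meets the smooth surface $S$ in the smooth curve $\iota(C)$ of the expected dimension $1$. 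This is case~(6).

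The step I expect to be the main obstacle is the descent to $k$, which rests on two inputs whose geometric (over $\overline{k}$) forms are classical but whose intrinsic forms need the sharper results of Mukai: (a) the quintic del Pezzo $S$ through $\iota(C)$ is unique, so that it is defined over $k$ even when no $g^2_6$ on $C$ is $k$-rational; and (b) a quintic del Pezzo over the non-closed field $k$ is still a linear section of $\Gr(2,5)$ over $k$, via a $5$-dimensional $k$-vector space reconstructed functorially from the surface. A secondary point requiring care is confirming that ``$C$ not hyperelliptic, trigonal, a plane quintic, or bielliptic'' genuinely forces the del Pezzo picture with no further degenerate sub-cases; this relies on the Petri analysis of \S\ref{sec:canonical curves} together with the classical enumeration of the $g^2_6$ and $g^1_4$ on a genus-$6$ curve.
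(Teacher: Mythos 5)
Your handling of cases (1)--(5) coincides with the paper's: the paper also dispatches everything outside the generic stratum by Petri's theorem together with the Maroni-invariant analysis of \S\ref{sec:canonical curves}. For case (6) you take a genuinely different route. The paper follows Mukai's vector-bundle argument: from a $g^1_4$ $\xi$ and its Serre adjoint $\eta$ it produces the unique nontrivial extension $0 \to \xi \to E \to \eta \to 0$ that is a stable rank-$2$ bundle with $h^0(E) = 5$, the uniqueness coming from a short cohomological count ($\ker \Delta$ is one-dimensional because $\Sym^2 H^0(\eta) \to H^0(\eta^2)$ is injective with one-dimensional cokernel); uniqueness gives Galois descent of $E$ immediately, $\Phi_{|E|}$ embeds $C$ into $\Gr(2, H^0(E)^\vee)$, and Petri supplies the final quadric. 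You instead use the classical del Pezzo picture: the plane sextic model, the quintic del Pezzo $S \supset \iota(C)$, its uniqueness as the descent mechanism, and the presentation of $S$ as a linear section of $\Gr(2,5)$ over $k$. The two pictures are equivalent ($\Gr(2,5) \cap \PP^5$ \emph{is} the del Pezzo, and $H^0(E)$ is your space $U$), so your route can be made to work; but the two inputs you defer --- (a) uniqueness of $S$ and (b) the $k$-rational $\Gr(2,5)$-structure on a quintic del Pezzo over a non-closed field --- are exactly where the content lies, and neither is proved in your write-up. The bundle-theoretic formulation is what makes both steps cheap in the paper: the descent datum is a single bundle whose uniqueness is an explicit dimension count, visibly characteristic-free, and one never confronts the singular del Pezzo surfaces that arise when the $g^1_4$'s degenerate. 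Your parenthetical ``possibly mildly singular'' conceals a real issue: that a Gorenstein quintic del Pezzo with canonical singularities over $k$ is still a linear section of $\Gr(2,5)_k$ is true but appreciably harder than the smooth case, and your birational analysis (``four double points'') must also accommodate infinitely near singularities. As it stands your argument is a correct skeleton resting on unproved (though true) classical facts; completing it would require supplying (a) and (b) in their singular and arithmetic forms, at which point the paper's self-contained cohomological route is the more economical choice.
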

\begin{proof}
This again follows from Petri's theorem except for the last case, in which the description can be found in \cite[Theorem~5.2]{mukai-cg}.
We recall that argument both to fill in some details that are left to the reader in \cite{mukai-cg} (by comparison with a similar argument in genus 8),
and to see that it is characteristic-free and applies over a finite base field.

By the Brill--Noether theorem on the existence of special divisors (see \cite{kleiman-laksov} for a characteristic-free treatment), 
$C_{\overline{k}}$ admits a $g^1_4$, which we call $\xi$; let $\eta := \omega_C \xi^{-1}$ be its Serre adjoint. 
By Riemann--Roch we have 
\[
h^0(\xi) = 2, \qquad h^0(\eta) = h^0(\xi) + g-1-\deg(\xi) = 3;
\]
that is, $\eta$ is a $g^2_6$.
Since $C$ is not trigonal or a plane quintic, the linear system $|\eta|$ is basepoint-free;
we thus have a map $\Phi_{|\eta|}\colon C_{\overline{k}} \to \PP^2_{\overline{k}}$ induced by $\eta$.
The image of $\Phi_{|\eta|}$ cannot be a singular cubic or a smooth cubic because we are assuming $C$ is neither hyperelliptic nor bielliptic,
so it must be a sextic curve $\overline{C}$.
Other than $\xi$, every $g^1_4$ of $C_{\overline{k}}$ arises by projection from a double point of $\overline{C}$; it follows that the space
$W^1_4(C_{\overline{k}})$ parametrizing the $g^1_4$'s of $C_{\overline{k}}$ is finite (recovering \cite[Proposition~5.3]{mukai-cg}).

We now emulate \cite[Lemma~3.6]{mukai-cg}.
The extensions $0 \to \xi \to E \to \eta \to 0$ are parametrized by $\Ext(\eta, \xi) \cong H^1(\eta^{-1} \xi)$, which is Serre dual to
$H^0(\eta^2)$. For an extension $e$, let $\delta_e: H^0(\eta) \to H^1(\xi)$ be the corresponding connecting homomorphism; then the linear
map
\[
\Delta\colon \Ext(\eta, \xi) \to H_0(\eta)^\vee \otimes H^1(\xi), \qquad e \mapsto \delta_e
\]
is dual to the multiplication map
\[
\mu\colon H^0(\eta) \otimes H^0(\eta) \to H^0(\eta^2).
\]
By Riemann--Roch again, $h^0(\eta^2) = \deg(\eta^2) - g + 1 = 7$. 
Since the image of $\Phi_{|\eta|}$ cannot be contained in a conic,
the linear map $\Sym^2 H^0(\eta) \to H^0(\eta^2)$ is injective, so its cokernel has codimension 1.
We conclude that $\ker(\Delta)$ is one-dimensional, and so there is a unique nontrivial extension of $\eta$ by $\xi$ which is a stable bundle with five linearly independent global sections. Because $k$ is perfect, this uniqueness property ensures that the resulting vector bundle of rank 2 on $C_{\overline{k}}$ descends to a unique vector bundle $E$ on $C$. 
We have now recovered \cite[Theorem~5.1(1)]{mukai-cg}.
We deduce from this an analogue of \cite[Lemma~3.10]{mukai-cg}: if $\xi'$ is any $g^1_4$ on $C_{\overline{k}}$, then
$\dim \Hom(\xi', E) \leq 1$.

Since $\delta_e = 0$ and $\xi$ and $\eta$ are generated by global sections, so is $E$.
Hence for each point in $C$, the fiber of $E$ at this point is a 2-dimensional quotient of the 5-dimensional space $H^0(E)$; this defines a map $\Phi_{|E|}\colon C \to \Gr(2, H^0(E)^\vee)$. Let $\lambda\colon \wedge^2 H^0(E) \to H^0(\wedge^2 E) = H^0(\omega_C)$ be the natural map. We then have a commutative diagram
\[
\xymatrix{
C_{\overline{k}} \ar^{\Phi_{|E|}}[r] \ar[d] & \Gr(2, H^0(E)^\vee) \ar[d] \\
\PP(H^0(\omega_C)) \ar^{\PP(\lambda)}[r] & \PP(\wedge^2 H^0(E))
}
\]
where the left vertical arrow is the canonical embedding and the right vertical arrow is the Pl\"ucker embedding.
The hyperplanes of $\PP(\wedge^2 H^0(E))$ are parametrized by $\PP((\wedge^2 H^0(E))^\vee)$;
the hyperplanes among these which containing the image of $C$ are parametrized by $\PP((\ker \lambda)^\vee)$.

We now emulate \cite[Theorem~B]{mukai-cg}.
Suppose that $U \subset H^0(E)$ is a 2-dimensional subspace such that $\lambda(\wedge^2 U) = 0$.
Then the evaluation map $U \otimes \calO_C \to E$ is not generically surjective; its image is a line subbundle $L$ of $E$
satisfying $h^0(L) \geq 2$. The stability of $E$ forces $\deg(L) < 5$, and $h^0(L) = 2$ since $C$ has no $g^2_4$ by the adjunction formula.
Since $C$ is not hyperelliptic or trigonal, $L$ must be a $g^1_4$. That is, this construction defines a map from
$\PP((\ker \lambda)^\vee) \cap \Gr(2, H^0(E)^\vee)$ to $W^1_4(C_{\overline{k}})$; this map is injective by
our analogue of \cite[Lemma~3.10]{mukai-cg} and surjective by \cite[Proposition~3.1]{mukai-cg}.
We have now recovered \cite[Theorem~5.1(2)]{mukai-cg}.

We now follow the proof of \cite[Theorem~5.2]{mukai-cg} as written. To wit, since $\PP((\ker \lambda)^\vee) \cap \Gr(2, H^0(E)^\vee) \cong W^1_4(C_{\overline{k}})$ is finite
and $\Gr(2, H^0(E)^\vee)$ has codimension 3 in $\PP(\wedge^2 H^0(E))$,
$\dim(\ker \lambda) \leq 4$; hence $\lambda$ is surjective and so $\Phi_{|E|}$ is an embedding.
By Petri's theorem (\S\ref{sec:canonical curves}), 
the image of $\Phi_{|E|}$ is cut out by the hyperplanes in $\PP((\ker \lambda)^\vee)$ plus a single quadric.
\end{proof}

\begin{theorem} \label{T:Mukai class}
Let $C$ be a curve of genus $7$ over a finite field $k$. Then one of the following holds.
\begin{enumerate}
\item
The curve $C$ is hyperelliptic.
\item
The curve $C$ is trigonal of Maroni invariant $3$.
In this case, $C$ occurs as a hypersurface of degree $9$ in $\PP(1:1:3)_k$.
\item
The curve $C$ is trigonal of Maroni invariant $1$.
In this case, $C$ occurs as a complete intersection of type $(1,1) \cap (3,3)$ in $\PP^1_k \times_k \PP^2_k$.
\item
The curve $C$ is bielliptic.
\item
The curve $C$ is not bielliptic but admits a $g^2_6$ which is self-adjoint (squares to the canonical class).
In this case, $C$ is a complete intersection of type $(3) \cap (4)$ in $\PP(1 : 1:1:2)_k$,
where the degree $3$ hypersurface can be taken to be defined by $x_0x_3 + P_3(x_1, x_2) = 0$ for some separable cubic $P_3$.
\item
The curve $C$ admits a pair of distinct $g^2_6$'s. In this case,
$C$ occurs as a complete intersection of type $(1,1) \cap (1,1) \cap (2,2)$
in $\PP^2_k \times_k \PP^2_k$.
\item
The curve $C$ does not admit a $g^2_6$ but $C_{\overline{k}}$ does.
In this case, $C$ occurs as a complete intersection of type $(1,1) \cap (1,1) \cap (2,2)$
in the quadratic twist of $\PP^2_k \times_k \PP^2_k$.
\item
The curve $C$ admits a $g^1_4$ but $C_{\overline{k}}$ does not admit a $g^2_6$.
In this case, $C$ occurs as a complete intersection of type $(1,1) \cap (1,2) \cap (1,2)$ in $\PP^1_k \times_k \PP^3_k$
in which the $(1,1)$-hypersurface is a $\PP^2$-bundle over $\PP^1$. (It is also true that all of the $(1,2)$-hypersurfaces vanishing on $C$ are geometrically irreducible, but we won't use this here.)
\item
The curve $C$ does not admit a $g^1_4$.
In this case, $C$ occurs as a transverse intersection of $9$ hyperplanes and the orthogonal Grassmannian $\OG^+(5, 10)$ in $\PP^{15}_k$.
\end{enumerate}
\end{theorem}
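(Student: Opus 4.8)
The plan is to stratify the moduli of genus-7 curves by the Brill--Noether locus $W^1_4$ and $W^2_6$, exactly paralleling the genus-6 argument just given but now following Mukai's treatment in \cite{mukai} of the orthogonal Grassmannian. First I would dispose of the easy strata: if $C$ is hyperelliptic (case~1), trigonal (cases 2 and 3, split by Maroni invariant $n \in \{1,3\}$ via the constraint $0 \le n \le \lfloor (g+2)/3 \rfloor$, $n \equiv g \pmod 2$ from \S\ref{sec:canonical curves}), or bielliptic (case~4), the descriptions follow from Petri's theorem and the scroll analysis already set up: for $n=3 = (g+2)/3$ the curve $C$ embeds in $\PP(1:1:3)_k$ and the class computation gives degree $9$; for $n=1$ one reads off the bidegree $(3,3)$ in $\PP^1_k \times_k \PP^2_k$ from \eqref{eq:Fn linear systems}. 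So assume henceforth $C$ is not hyperelliptic, trigonal, or bielliptic.

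Next I would invoke the Brill--Noether existence theorem (\cite{kleiman-laksov}): since $\rho(7,1,4) = 7 - 2\cdot(7-1-4+1) = 7 - 4 \ge 0$... let me recompute, $\rho = g - (r+1)(g-d+r) = 7 - 2\cdot(7-4+1) = 7 - 8 = -1 < 0$, so a general genus-7 curve has \emph{no} $g^1_4$, which is exactly what distinguishes case~9. Thus the main dichotomy is whether $C_{\overline{k}}$ admits a $g^1_4$. If it does not, then by Mukai's theorem \cite{mukai} the canonical curve lies on the 10-dimensional spinor variety $\OG^+(5,10) \subset \PP^{15}$ (a connected component of the orthogonal Grassmannian of isotropic 5-planes in a 10-dimensional quadratic space), cut out there by $9$ hyperplanes; the key point is that, as in the genus-6 case, the relevant rank-5 vector bundle $E$ arising from the unique (stable) extension is characterized by a uniqueness property and hence, $k$ being perfect, descends from $\overline{k}$ to $k$, so the whole construction is defined over the finite field $k$ and the embedding $\PP^{15}_k$ is untwisted. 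This is the case where Appendix~\ref{sec:orbit lookup} will later be needed for the actual enumeration.

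If $C_{\overline{k}}$ does admit a $g^1_4$, I would further stratify by the $g^2_6$'s, using Serre duality: a $g^2_6$ $\eta$ has Serre adjoint $\xi = \omega_C\eta^{-1}$ with $\deg\xi = 12 - 6 = 6$ and, by Riemann--Roch, $h^0(\xi) = h^0(\eta) + \deg\xi - g + 1 - (r_\eta+1)$... more simply, a $g^2_6$ is self-adjoint iff $\eta^2 = \omega_C$, giving case~5, where $\Phi_{|\eta|}$ realizes $C$ as a plane sextic model and the adjunction/projection analysis puts $C$ in $\PP(1:1:1:2)_k$ as a $(3)\cap(4)$ complete intersection with the cubic of the stated normal form $x_0x_3 + P_3(x_1,x_2)$. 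If $\eta$ is not self-adjoint, then $\xi$ is a second, distinct $g^2_6$, and the pair $(\eta,\xi)$ gives a map $C \to \PP^2 \times \PP^2$; counting shows the image is a $(1,1)\cap(1,1)\cap(2,2)$ complete intersection (cases 6 and 7, according to whether the two $g^2_6$'s are individually defined over $k$ or only defined over $\overline{k}$ and conjugate, the latter forcing the quadratic twist of $\PP^2_k \times_k \PP^2_k$). Finally, if $C$ has a $g^1_4$ but $C_{\overline{k}}$ has no $g^2_6$ at all, then one uses the $g^1_4$ (a $\PP^1$ direction) together with the canonical embedding; the residual series has the right numerics to realize $C$ as a $(1,1)\cap(1,2)\cap(1,2)$ complete intersection in $\PP^1_k \times_k \PP^3_k$ with the $(1,1)$-hypersurface a $\PP^2$-bundle over $\PP^1$ (case~8). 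Checking completeness of this list, and the mutual exclusivity of the strata, uses Castelnuovo--Severi (as in \S\ref{sec:canonical curves}, to isolate low-degree pencils) together with Riemann--Roch bookkeeping.

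The main obstacle I expect is case~9: verifying that Mukai's description of a general genus-7 curve as a linear section of the spinor tenfold $\OG^+(5,10)$ is genuinely characteristic-free and works over a non-closed finite field. This requires (i) reconstructing the rank-5 bundle $E$ from an extension of the form $0 \to \xi \to E \to \eta \to 0$ with the analogous $\ker(\Delta)$ computation showing the stable extension is unique (so $E$ descends to $k$), (ii) identifying $H^0(E)$ with a $10$-dimensional quadratic space whose spinor embedding is $\OG^+(5,10) \subset \PP^{15}$, and (iii) checking that the hyperplanes vanishing on $\Phi_{|E|}(C)$ are parametrized by $\PP((\ker\lambda)^\vee)$ for $\lambda$ the natural map $\wedge^2 H^0(E) \to H^0(\omega_C)$, with $\dim\ker\lambda = 10$ forcing the $9$-dimensional linear section and Petri-type generation. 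Distinguishing cases 6 versus 7 (and more subtly, ruling out that the two conjugate $g^2_6$'s coincide, which would collapse into an earlier case) is the second delicate point, handled by a Galois-descent argument on the pair of $g^2_6$'s.
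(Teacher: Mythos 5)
Your stratification and your treatment of cases (1)--(8) follow essentially the same route as the paper: Petri plus the Maroni constraint $n\in\{1,3\}$ for the trigonal cases, then the dichotomy on existence of a $g^1_4$ on $C_{\overline{k}}$, then the sub-stratification by $g^2_6$'s (self-adjoint, a distinct adjoint pair rational or conjugate over $k$, or none), with descent of the relevant linear systems handled via finiteness of $k$. One detail you gloss over but should include is the argument that when the $g^2_6$ map is birational onto a sextic there are \emph{no} $g^2_6$'s beyond the adjoint pair $\alpha,\beta$ (the paper proves this via $h^0(\zeta\xi)+h^0(\zeta\eta)=12$ for a putative third $\zeta$); without it the case division in (5)--(7) is not clearly exhaustive.

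The genuine gap is in your sketch of case (9). You propose to build the rank-$5$ bundle $E$ as the unique stable extension $0\to\xi\to E\to\eta\to 0$ with a $\ker(\Delta)$ computation, by analogy with genus $6$. But in case (9) the curve has \emph{no} $g^1_4$, so there is no line bundle $\xi$ available to serve as the sub-line-bundle, and this construction cannot even be set up. Mukai's genus-$7$ construction is different: one takes $W=H^0(\PP^6_k,I_C(2))$ (of dimension $10$ by Petri) and $E=N^\vee_{C/\PP^6_k}\otimes\omega_C^2$, the twisted conormal bundle of the canonical embedding, which is canonically defined over $k$ --- so descent of $E$ is automatic and is not where the rationality subtlety lies. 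The actual subtlety over a non-closed field, which your proposal misses, is that the distinguished quadratic form on the $10$-dimensional space $W$ (coming from the kernel of $\Sym^2 W\to H^0(C,\Sym^2 E)$) could a priori be either of the two isomorphism classes of nondegenerate forms over $k$ (distinguished by discriminant, or Arf invariant in characteristic $2$), and only the split one has an $\OG^+$ with $k$-points. The paper resolves this by passing to an odd-degree extension of $k$ over which $C$ acquires a rational point, forcing the split form. Your item (iii), with $\lambda\colon\wedge^2H^0(E)\to H^0(\omega_C)$ and ``$\dim\ker\lambda=10$,'' is likewise transplanted from the genus-$6$ Grassmannian argument and does not match the spinor-embedding mechanism (joint kernels of Clifford operators on $\wedge^{\mathrm{ev}}L_0$) that actually produces the embedding into $\PP^{15}$.
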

\begin{proof}
Petri's theorem covers cases (1)--(3). We treat cases (4)--(8) as summarized in \cite[Table~1]{mukai},
postponing case (9) until \S\ref{sec:OG} where we introduce the relevant notation.

Suppose that $C_{\overline{k}}$ is not hyperelliptic or trigonal but admits a $g^1_4$; let $\xi$ be one such and let $\eta:= \omega_C \xi^{-1}$ be its Serre adjoint,
which by Riemann--Roch is a $g^2_8$. Since $C$ cannot admit a $g^2_5$, $|\eta|$ is basepoint-free. Let $\pi\colon C_{\overline{k}} \to \PP^1_{\overline{k}}$
and $\tau\colon C_{\overline{k}} \to \PP^3_{\overline{k}}$ be the maps defined by $|\xi|$ and $|\eta|$.

If $C_{\overline{k}}$ has no $g^2_6$, then $\tau$ is an embedding; its image cannot lie in a quadric by the adjunction formula, so
$\eta$ does not factor as a product of two $g^1_4$'s.
By \cite[Corollary~3.2]{mukai-cg}, any $g^1_4$ other than $\xi$ would have to occur as a subbundle of $\eta$,
so in fact $\xi$ is the unique $g^1_4$ on $C_{\overline{k}}$.
This means that both $\xi$ and $\eta$ descend from $C_{\overline{k}}$ to $C$.
We can now follow the proofs of \cite[Lemma~6.1, Proposition~6.3]{mukai} to the desired conclusion.

Suppose instead that $C_{\overline{k}}$ has a $g^2_6$; let $\alpha$ be one such and let $\beta$ be its Serre adjoint, which is also a $g^2_6$.
Since $C_{\overline{k}}$ is not hyperelliptic or trigonal, the map $f\colon C_{\overline{k}} \to \PP^2_{\overline{k}}$ defined by $|\alpha|$
is either birational onto a sextic or a double cover of a smooth cubic. In the latter case $C_{\overline{k}}$ is evidently bielliptic, as then is $C$.
In the former case, from the proof of \cite[Proposition~3.1]{mukai-cg} we see that there are no $g^2_6$'s on $C_{\overline{k}}$ other than $\alpha$ and $\beta$. Namely, if $\zeta$ is a third $g^2_6$, then for $E = \xi \oplus \eta$ we have $h^0(\zeta^{-1}E) = 0$ and so 
\[
h^0(\zeta \xi) + h^0(\zeta \eta) = h^0(\zeta E) = h^0(\omega_C \zeta E^{\vee}) = h^0(\zeta^{-1}E) + 2 \deg(\zeta) = 12;
\]
this is only possible if one of $\zeta \xi$ or $\zeta \eta$ is special, which is impossible because they are both of degree 12 and not canonical.

If $\alpha$ and $\beta$ are isomorphic, then they both descend to $C$;
otherwise, they descend either to $C$ or to its quadratic base extension.
We can now follow the proof of \cite[Proposition~6.5]{mukai} to conclude.
\end{proof}

\begin{remark}
In \cite[Proposition~6.4]{mukai}, it is also shown that  bielliptic curves occur as complete intersections of type $(3) \cap (4)$ in $\PP(1:1:1:2)_k$. We will not use this in our computations.
\end{remark}

\begin{remark} \label{rmk:Brill-Noether}
While we will not need to do so here, it is possible to push this treatment through to a few higher genera.  For example, Mukai showed that (over an algebraically closed field) a genus-$8$ curve with no $g^2_7$ is a linear section of the $8$-dimensional Grassmannian $\Gr(2,6) \subset \PP^{14}$ \cite{mukai}; building on this, the complete Brill--Noether stratification in genus 8 has been described by Ide--Mukai \cite{mukai-ide}.
Similarly, a genus-$9$ curve with no $g^1_5$ is a linear section of the 6-dimensional symplectic Grassmannian $\SpG(3, 6) \subset \PP^{13}$ \cite{mukai2}, and an analogous assertion holds in genus 10 \cite{mukai10}. Pushing this even further would amount to establishing unirationality of the moduli space of genus-$g$ curves, which is known to hold for $g \leq 14$ \cite{verra} and to fail for $g \geq 22$
\cite{eisenbud-harris}, \cite{farkas}.
\end{remark}

\section{Overview of the proof}

We now give an overview of the proof of Theorem~\ref{T:main}.

\begin{lemma} \label{L:main computation}
For the various strata in moduli described above,
the number of isomorphism classes of curves $C$ over $\FF_2$ in each stratum
admitting \'etale double coverings $C' \to C$ such that $\#J(C')(\FF_2) = \#J(C)(\FF_2)$
 is given in Table~\ref{table:stratification}. In particular, Theorem~\ref{T:main} holds.
\end{lemma}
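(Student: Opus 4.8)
The plan is to prove Lemma~\ref{L:main computation} by working through the Brill--Noether strata of Theorems~\ref{T:genus 6 strat} and~\ref{T:Mukai class} one at a time, and for each stratum either (i) enumerating all curves $C$ over $\FF_2$ in that stratum up to isomorphism, or (ii) using the Weil polynomial constraints of \cite{part1} (in particular the list of 40 target Weil polynomials and the associated point counts recorded in Table~\ref{table:genus 567 counts}) to prune the enumeration drastically before it begins. The first reduction I would make is that only finitely many Weil polynomials are in play, and each one pins down $\#C(\FF_{2^i})$ for small $i$; in particular it fixes $\#C(\FF_2)$, which in every stratum is a strong constraint on the defining data (e.g. the number of $\FF_2$-points lying on a plane quintic, on a $(3,4)$-curve in $\PP^1\times\PP^1$, or on a linear section of $\Gr(2,5)$ or $\OG^+(5,10)$). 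So the actual computational task in each stratum is: enumerate the finitely many curves over $\FF_2$ whose point count is compatible with the target list, compute their zeta functions, keep those matching a target Weil polynomial, and for each survivor use \Magma{} to list the \'etale double covers $C'\to C$ and test whether $\#J(C')(\FF_2)=\#J(C)(\FF_2)$.

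Concretely, stratum by stratum: the hyperelliptic loci in genus $6$ and $7$ are handled by directly enumerating hyperelliptic models $y^2 + h(x)y = f(x)$ over $\FF_2$ up to the relevant equivalence (this is a small finite computation, and the Weil polynomial constraint cuts it further). The trigonal strata are enumerated via the scroll models in \S\ref{sec:canonical curves}: a genus-$6$ trigonal curve of Maroni invariant $2$ is a $(2,1)\cap(1,3)$ in $\PP^1\times\PP^2$ with the $(2,1)$-form cutting out $\mathbf{F}_2$, of Maroni invariant $0$ is a $(3,4)$-curve in $\PP^1\times\PP^1$; a genus-$7$ trigonal curve is a degree-$9$ hypersurface in $\PP(1:1:3)$ or a $(1,1)\cap(3,3)$ in $\PP^1\times\PP^2$. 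In each case one enumerates the coefficient tuples over $\FF_2$ (modulo the automorphisms of the ambient scroll), imposes smoothness, and filters by zeta function. The bielliptic strata are enumerated by listing genus-$1$ curves $E/\FF_2$ and degree-$2$ covers $C\to E$ (equivalently, via the bielliptic involution acting on a canonical or $g^2_6$ model). The plane-quintic stratum in genus $6$ is a finite enumeration of quintic forms in $\PP^2_{\FF_2}$ modulo $\PGL_3(\FF_2)$. The Mukai strata are the heart of the argument: the generic genus-$6$ stratum is a transverse $\PP^5$-section of $\Gr(2,5)\subset\PP^9$ intersected with a quadric, and the genus-$7$ strata (4)--(9) are the complete intersections in products of projective spaces, in $\PP(1:1:1:2)$, or the transverse $\PP^6$-section of $\OG^+(5,10)\subset\PP^{15}$. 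For these, the key move is to fix a model of the homogeneous space over $\FF_2$, enumerate linear sections up to the automorphism group of that space (using the orbit-lookup method of Appendix~\ref{sec:orbit lookup} to handle, e.g., $6$-element sets of $\FF_2$-points on the $10$-dimensional $\OG$), impose the point-count constraint early to keep the lists manageable, check smoothness and genus, and compute zeta functions of the survivors.

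The main obstacle I expect is the combinatorial explosion in the Mukai strata, especially genus~$7$ cases (8) and (9): the spaces of linear sections of $\Gr(2,5)$, $\OG^+(5,10)$, and of $(1,1)\cap(1,2)\cap(1,2)$-type complete intersections in $\PP^1\times\PP^3$ are large, and naive enumeration modulo the relevant automorphism group is infeasible. The essential trick is that a curve $C$ over $\FF_2$ realizing one of the target Weil polynomials has a prescribed, typically small, number of $\FF_2$-rational points, and these points must lie on the homogeneous variety; so one should first enumerate \emph{configurations of $\FF_2$-points} on $\Gr(2,5)$ or $\OG^+(5,10)$ of the right size, up to the automorphism group (this is exactly what the orbit-lookup algorithm is for), and only then extend each configuration to a full linear section containing it. This "points first" strategy, combined with the zeta-function filter and the requirement of smoothness, is what makes the search terminate. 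A secondary subtlety is arithmetic rather than geometric: several Mukai descriptions are only canonical over $\overline{k}$ (the two $g^2_6$'s in genus $7$, or the quadratic-twist cases (7)), so one must track Galois descent carefully — but the excerpt has already done this bookkeeping in Theorems~\ref{T:genus 6 strat} and~\ref{T:Mukai class}, reducing each twisted case to an enumeration over $\FF_2$ with the twisted ambient space. Once every stratum is cleared, the union of the survivors is exactly the list in Table~\ref{table:stratification}, and reading off the genus-$6$ and genus-$7$ entries with their automorphism groups gives Theorem~\ref{T:main}.
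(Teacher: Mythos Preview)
Your proposal is correct and follows essentially the same approach as the paper: stratify via Brill--Noether, enumerate curves in each stratum (using the orbit-lookup method and the ``points first'' trick on the Mukai homogeneous spaces), filter by the target Weil polynomials, and then check \'etale double covers in \Magma. The only notable difference is that the paper disposes of several small strata (hyperelliptic in both genera, genus-$7$ trigonal of Maroni invariant $3$, and genus-$6$ bielliptic) with no enumeration at all, by observing in \S\ref{sec:point counts} that the point counts in Table~\ref{table:genus 567 counts} already violate $\#C(\FF_{2^i}) \leq d\cdot\#X(\FF_{2^i})$ for the relevant degree-$d$ cover $C\to X$; your plan to enumerate and then filter would reach the same empty answer.
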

\begin{table}[ht]
\Small
\caption{Outline of the use of the Brill--Noether stratification in the proof of Lemma~\ref{L:main computation}. Of the columns, ``Dim'' records the dimension of the stratum in moduli, ``See'' locates the description of this case in the text, ``$\#C$'' counts curves whose point counts appear in Table~\ref{table:genus 567 counts}, and ``$\#C'$'' counts double covers with relative class number 1.}
\begin{tabular}{c||c|c|c|c|c||c|c|c||c|c}
& \multicolumn{5}{c||}{$g=6$} & \multicolumn{5}{c}{$g=7$} \\
Type of $C$ & Dim & See & $\#C$ & $\#C'$ & Time & Dim & See & $\#C$ & $\#C'$& Time\\
\hline
hyperelliptic & 11 & \S\ref{sec:point counts} & 0 &0 & --- &13 & \S\ref{sec:point counts} & 0 & 0 & ---\\
trigonal, Maroni $\geq 2$ & 12 & \S\ref{sec:orbit calc} & 4& 0& 10m & 13 & \S\ref{sec:point counts} & 0 & 0 & --- \\
trigonal, Maroni $\leq 1$ & 13 & \S\ref{sec:orbit calc} & 9& 0& 2m & 15 & \S\ref{sec:orbit calc} & 0 & 0 & 5m \\
bielliptic & 10 & \S\ref{sec:point counts} & 0 & 0 &---& 12 & \S\ref{sec:point counts} & 2 & 1 & 5m\\
plane quintic &12 & \S\ref{sec:orbit calc} &1&0& 1m & --- & --- & --- & --- & --- \\
self-adjoint $g^2_6$ &--- &---&---& ---& ---&15 & \S\ref{sec:orbit calc} & 0 & 0 & 5m\\
rational $g^2_6$ pair &--- &---&--- &---&---&16&\S\ref{sec:orbit calc} & 0 & 0 & 30m\\
irrational $g^2_6$ pair &---& --- & --- &---&---&16& \S\ref{sec:orbit calc} & 0 & 0 & 45m\\
tetragonal, no $g^2_6$ &---&---&--- &---& ---&17& \S\ref{sec:orbit calc} & 1 & 0 & 2h\\
generic &15& \S\ref{sec:orbit calc} & 38 & 2 & 2.5h&18& \S\ref{sec:OG} & 1 & 0 & 1h
\end{tabular}
\label{table:stratification}
\end{table}
\begin{proof}
To begin with, we recall from \cite[Theorem~1.3(b)]{part1} that the Weil polynomials of $C$ and $C'$ are restricted to an explicit finite list. In Table~\ref{table:genus 567 counts}, we list the possible values of the tuple $(\#C(\FF_{2^i}))_{i=1}^g$.

\begin{table}[ht]
\Small
\begin{tabular}{ccc}
4, 14, 16, 18, 14, 92 & 5, 11, 11, 31, 40, 53 & 6, 10, 9, 38, 11, 79\\
4, 14, 16, 18, 24, 68 & 5, 11, 11, 31, 40, 65 & 6, 10, 9, 38, 21, 67\\
4, 14, 16, 26, 14, 68 & 5, 11, 11, 39, 20, 53 & 6, 10, 9, 38, 31, 55\\
4, 16, 16, 20, 9, 64 & 5, 11, 11, 39, 20, 65 & 6, 14, 6, 26, 26, 68\\
5, 11, 11, 31, 20, 65 & 5, 13, 14, 25, 15, 70 & 6, 14, 6, 26, 26, 80\\
5, 11, 11, 31, 20, 77 & 5, 13, 14, 25, 15, 82 & 6, 14, 6, 26, 36, 56\\
5, 11, 11, 31, 20, 89 & 5, 13, 14, 25, 15, 94 & 6, 14, 6, 34, 16, 56\\
5, 11, 11, 31, 30, 53 & 5, 13, 14, 25, 25, 46 & 6, 14, 6, 34, 26, 44\\
5, 11, 11, 31, 30, 65 & 5, 13, 14, 25, 25, 58 & 6, 14, 12, 26, 6, 44\\
5, 11, 11, 31, 30, 77 & 5, 13, 14, 25, 25, 70 & 6, 14, 12, 26, 6, 56\\
5, 11, 11, 31, 30, 89 & 5, 15, 5, 35, 20, 45 & 6, 14, 12, 26, 6, 66
\end{tabular}
\qquad
\begin{tabular}{c}
6, 18, 12, 18, 6, 60, 174 \\
6, 18, 12, 18, 6, 72, 132 \\
6, 18, 12, 18, 6, 84, 90 \\
7, 15, 7, 31, 12, 69, 126 \\
7, 15, 7, 31, 22, 45, 112 \\
7, 15, 7, 31, 22, 57, 70 \\
7, 15, 7, 31, 22, 57, 84 
\end{tabular}

\medskip
\caption{Tuples $(\#C(\FF_{2^i}))_{i=1}^{g}$ allowed by \cite[Theorem~1.3(b)]{part1} for $g = 6,7$.}
\label{table:genus 567 counts}
\end{table}

For each stratum, we exhibit a set $T$ of schemes of finite type over $\FF_2$ of size at most $10^6$, such that every curve $C$ over $\FF_2$ belonging to the specified stratum whose point counts are consistent with Table~\ref{table:genus 567 counts} is isomorphic to some scheme in $T$.
In most cases, all of the schemes in $T$ will be presented as subschemes of a single ambient scheme $X$;
see Table~\ref{table:stratification} for internal cross-references.

Given a set $T$ as indicated, we conclude as follows (iterating over all $C \in T$).
 All computations are done in \SageMath{} except as indicated.
 \begin{itemize}
\item
Optionally, for one or more $i \geq 1$, compute $\#C(\FF_{2^i})$ using a lookup table of $X(\FF_{2^i})$, retaining cases consistent with Table~\ref{table:genus 567 counts}. We typically do this when we have at least $10^5$ cases to deal with.
\item
Optionally, for one or more $i \geq 1$, compute $\#C(\FF_{2^i})$ by computing the length of the intersection in $C \times_{\FF_2} C$ of the diagonal with the graph of the $i$-th power of the Frobenius morphism, retaining cases consistent with Table~\ref{table:genus 567 counts}.
We typically do this when we have between $10^4$ and $10^5$ cases to deal with.
\item
Use \Magma{} to check whether $C$ is one-dimensional and integral, and if so whether its normalization has genus $g$. If so, compute $\#C(\FF_{2^i})$ for $i=1,\dots,g$ by enumerating places of the function field of $C$, retaining cases consistent with Table~\ref{table:genus 567 counts}.
\item
Use \Magma{} to compute isomorphism class representatives among the remaining curves. The count of these is reported in Table~\ref{table:stratification}.
\item
Use \Magma{} to identify quadratic extensions of the remaining function fields with relative class number 1. The count of these is reported in Table~\ref{table:stratification};
this yields the claimed results.
\qedhere
\end{itemize}
\end{proof}

Table~\ref{table:stratification} also includes in each case a rough timing of the computation.
The timings should not be taken too seriously; they reflect some combination of the dimensions of the strata in moduli (included in Table~\ref{table:stratification}), the special nature of the Weil polynomials in question (which we exploit especially heavily for generic curves of genus 7), the highly nonuniform extent to which we attempted to optimize the calculation in the various cases, the imbalance between genus 6 and 7 in Table~\ref{table:genus 567 counts},
and variable load on the machine in question.

\section{Point counts}
\label{sec:point counts}

In a few cases of Lemma~\ref{L:main computation}, we can confirm that the options listed in Table~\ref{table:genus 567 counts} imply a nontrivial lower bound on the gonality of $C$.
This amounts to settling some cases of Lemma~\ref{L:main computation} with $T = \emptyset$.

\begin{itemize}
\item
If $g=6$, then $C$ cannot be hyperelliptic: we have $\#C(\FF_4) > 10 = 2\#\PP^1(\FF_4)$ except in three cases where
$\#C(\FF_{16}) = 38 > 2\#\PP^1(\FF_{16})$.
\item
If $g=7$, then $C$ cannot be hyperelliptic: we have $\#C(\FF_4) \geq 15 > 2\#\PP^1(\FF_4)$.
\item
If $g=7$ and $\#C(\FF_2) = 6$, then $C$ cannot be trigonal: we have $\#C(\FF_4) = 18 > 15 = 3\#\PP^1(\FF_4)$.
\item
If $g=7$ and $\#C(\FF_2) = 7$, then $C$ cannot be trigonal of Maroni invariant 3: we have $\#C(\FF_2) = 7$
which exceeds the number of \emph{smooth} points of $\PP(1:1:3)(\FF_2)$. %(Note that $C$ cannot pass through the singular point $[0:0:1]$, as this would lower the geometric genus.)
\end{itemize}

We can use similar logic in the case where $C$ is bielliptic.
Suppose that $C \to E$ is a double covering of an elliptic curve. 
Then the Weil polynomial of $E$ must divide that of $C$, and moreover must satisfy the resultant criterion \cite[Corollary~9.4]{part1}. For $g=6$, the possibilities are listed in 
Table~\ref{table:genus 6 bielliptic}; in most cases, we find that $\#C(\FF_{2^i}) > 2 \#E(\FF_{2^i})$ for some $i$, an impossibility. In one case, $\#C(\FF_4) = 2\#E(\FF_4)$, which ensures that $C \to E$ does not ramify over any degree-1 places, but this is inconsistent with the fact that $\#C(\FF_2) \not\equiv 0 \pmod{2}$. (Alternatively, the unique degree-3 place of $E$ must map to a degree-1 place of $C$, which again contradicts  $\#C(\FF_4) = 2\#E(\FF_4)$.) We thus again settle this case of Lemma~\ref{L:main computation} with $T = \emptyset$.

\begin{table}
\begin{tabular}{c|c|c}
$\#E(\FF_{2^i})_{i=1}^4$ & $\#C(\FF_{2^i})_{i=1}^4$ & Disposition \\
\hline
$(1, 5,13,25)$ & $(6,10,9,38)$ & $\#C(\FF_2) >2\#E(\FF_2)$ \\
$(3,9,9,9)$ & $(5,13,41,25)$ & $\#C(\FF_{16}) > 2\#E(\FF_{16})$ \\
$(3,9,9,9)$ & $(6,10,9,38)$ & $\#C(\FF_{16}) > 2\#E(\FF_{16})$ \\
$(5,5,5,25)$ & $(5,13,14,25)$ & $\#C(\FF_{4}) > 2\#E(\FF_{4})$ \\
$(5,5,5,25)$ & $(6,10,9,38)$ & $\#C(\FF_{4}) = 2\#E(\FF_{4})$, $\#C(\FF_2) \not\equiv 0 \pmod{2}$ \\
\end{tabular}
\medskip
\caption{Possible point counts for $C$ bielliptic of genus 6 covering the genus-1 curve $E$.}
\label{table:genus 6 bielliptic}
\end{table}

For $g=7$, we may make a similar application of the resultant criterion to see that $\#C(\FF_2) = 6$
and $\#E(\FF_2) \in \{3,5\}$. We can rule out $\#E(\FF_2) = 5$ by noting that $\#C(\FF_4) = 18 > 10 = 2\#E(\FF_4)$;
we must thus have $\#E(\FF_2) = 3$. Now note that $E$ has $p$-rank 0 and $C$ has $p$-rank 5, so 
by the Deuring--Shafarevich formula \cite[(7.2)]{part1} the map $E \to C$ must ramify over six distinct geometric points.
Since $\#C(\FF_4) = 18 = 2\#E(\FF_4)$, the map $C \to E$ cannot ramify over any degree-1 or degree-2 points of $C$;
the ramification is thus either over a single degree-6 place or over the two distinct degree-3 places of $C$. We may thus settle this case of Lemma~\ref{L:main computation} by computing the set $T$ of double covers of $E$ with the indicated ramification divisors using \Magma.

\begin{remark}
Although we did not exploit this systematically in our calculations,
we point out that for every entry of Table~\ref{table:genus 567 counts} with $g=7$,
\cite[Theorem~4.15]{howe2021} implies the existence of a map from $C$ to a particular elliptic curve of degree at most 5. For example, when $\#C(\FF_{2^i})_{i=1}^7 = (6,18,12,18,6,72,132)$, $C$ must admit a degree-2 map to the elliptic curve $E$ with $\#E(\FF_2) = 3$; consequently, this option can be ignored in all but the bielliptic case.
\end{remark}

\section{The use of orbit lookup trees}
\label{sec:orbit calc}

In most of the remaining cases, we use a uniform paradigm to make an exhaustive calculation over the relevant term of the Brill--Noether stratification. 
Again, all computations are done in \SageMath{} except as indicated.

\begin{itemize}
\item
Let $X$ be the ambient variety indicated in Table~\ref{table:orbit setup}.
Compute the set $S := X(\FF_2)$ and the group $G := \Aut(X)(\FF_2)$.
\item
Use the method of orbit lookup trees (Appendix~\ref{sec:orbit lookup}) to compute orbit representatives for the action of $G$ on subsets of $S$ of size up to $g$. In some cases, we can impose extra conditions on the set $S$.
\begin{itemize}
\item
For $g=7$ with a rational $g^2_6$, no three points of $S$ have the same projection onto either $\PP^2_k$.
\item
For $g=7$ tetragonal, no five points of $S$ have the same projection in $\PP^1_k$.
\end{itemize}
\item
For each orbit representative for subsets of size in $\{4, 5, 6\}$ (if $g=6$) or $\{6,7\}$ (if $g=7$), use linear algebra to find all tuples of hypersurfaces $X_1,\dots,X_{m-1}$ of the indicated degrees passing through these $\FF_2$-points.
In the case of $g=7$ trigonal of Maroni invariant 1, we require $X_1$ to be smooth.
\item
For each choice, impose linear conditions on $X_m$ to ensure that $X_1 \cap \cdots \cap X_m$ has \emph{exactly} the specified set of $\FF_2$-rational points. (This crucially exploits the fact that the base field is $\FF_2$; a similar strategy is used in \cite[\S 6]{faber-grantham}.)
Take $T$ to be the resulting set of schemes $X_1 \cap \cdots \cap X_m$.
\end{itemize}

See Table~\ref{table:orbit setup} for how the notation maps to the various cases. Some additional clarifications:
\begin{itemize}
\item
In the case of $g=6$ trigonal of Maroni invariant 2, we take $X = X_{2,1}$ to be defined by $(x_0^2 +x_1^2)y_1 + x_0 x_1 y_2$.
\item
In the case of $g=6$ generic, we find candidates for the intersection of type $(1)^4$ by computing orbits for the action on sets of 4 $k$-points of the dual of $\PP^9_k$.
We then apply generators of $\GL(4, \FF_2)$ to these subsets to identify cases where the
linear spans are $G$-equivalent (compare Remark~\ref{R:linear case}); 
this yields 20 candidates for $X_1 \cap \cdots \cap X_{m-1}$.
We finally enumerate subsets of $X \cap X_1\cap \cdots \cap X_{m-1}$ of size in $\{4,5,6\}$, without further use of the group action.
\item
In the case of $g=7$ with a self-adjoint $g^2_6$, we take $X = X_{3}$ to be defined by a polynomial of the form $x_0x_3 + P(x_1, x_2)$ with
\[
P \in \{x_1x_2(x_1+x_2), x_1(x_1^2+x_1x_2+x_2^2), x_1^3+x_1x_2^2 + x_2^3\}
\]
and ignore the group action.
\item
In the case of $g=7$ tetragonal, we take $X = X_{1,1}$ to be defined by $x_0 y_0 + x_1 y_1$. We then break symmetry when choosing the defining polynomials $P_1, P_2$ of $X_1, X_2$ by fixing a total ordering on the quotient of the space of $(1,2)$-polynomials by the multiples of $x_0 y_0 + x_1 y_1$ and then forcing an ordering on the classes of $P_1, P_2$.
\end{itemize}

The generic case in genus 7 is handled slightly differently to avoid the computational bottleneck of enumerating orbits of 7-element subsets of $X$; see \S\ref{sec:OG}.

\begin{table}[ht]
\begin{tabular}{c|c|c|c|c}
$g$ & Case & $X$ & $X_1,\dots,X_{m-1}$ & $X_m$ \\
\hline
6 & trigonal, Maroni 2 & $X_{2,1} \subset \PP^1 \times \PP^2$ & $\emptyset$ & $(1,3)$ \\
6 & trigonal, Maroni 0 & $\PP^1 \times \PP^1$ & $\emptyset$ & $(3,4)$ \\
6 & plane quintic & $\PP^2$& $\emptyset$ & $(5)$\\
6 & generic & $\Gr(2, 5) \subset \PP^9$ & $(1)^4$ & $(2)$ \\
\hline
7 & trigonal, Maroni 1 & $\PP^1 \times \PP^2$ & $(1,1)$ & $(3,3)$ \\
7 & self-adjoint $g^2_6$ & $X_3 \subset \PP(1:1:1:2)$ & $\emptyset$ & $(4)$ \\
7 & rational $g^2_6$ & $\PP^2 \times \PP^2$ & $(1,1)^2$ & $(2,2)$ \\
7 & irrational $g^2_6$ & twist of $\PP^2 \times \PP^2$ & $(1,1)^2$ & $(2,2)$ \\
7 & tetragonal & $X_{1,1} \subset \PP^1 \times \PP^3$ & $(1,2)$ & $(1,2)$ \\
\hline
7 & generic & $\OG^+ \subset \PP^{15}$ & $(1)^8$ & $(1)$
\end{tabular}
\medskip
\caption{Group actions associated to Brill--Noether strata.}
\label{table:orbit setup}
\end{table}

\section{The generic case in genus 7}
\label{sec:OG}

We now describe a variant of the paradigm from \S\ref{sec:orbit calc} to handle generic (non-tetragonal) curves of genus 7.
In the process, we summarize the proof of \cite[Theorem~0.4]{mukai} and so confirm case (9) of Theorem~\ref{T:Mukai class}.

Let $k$ be a finite field (of any characteristic).
Let $V$ be the vector space $k^{10}$ equipped with the quadratic form $\sum_{i=1}^5 x_i x_{5+i}$.
We write $\SO(V)$ for the unique index-2 subgroup of the orthogonal group of $V$;
it admits a characteristic-free characterization as 
the kernel of the \emph{Dickson invariant}.

The \emph{orthogonal Grassmannian} of $V$, denoted $\OG$, parametrizes Lagrangian (isotropic 5-dimensional) subspaces of $V$.
Let $L_0$ be the subspace spanned by the first 5 coordinate vectors $e_1,\dots,e_5$, which by construction is isotropic.
Then $\OG$ splits into two connected components, each of which parametrizes Lagrangian subspaces of $V$ whose
intersection with $L_0$ has a specified parity. Let $\OG^+$ be the component containing $L_0$; it carries an action of $\SO(V)$.

The space $\OG^+$ admits an analogue of the Pl\"ucker embedding called the \emph{spinor embedding}.
The target of the spinor embedding can be described as the projectivization of the even orthogonal algebra $\wedge^{\mathrm{ev}} L_0$. The spinor embedding can be computed easily using the following (characteristic-free) recipe described in \cite[\S 1]{mukai}.
Let $L_\infty$ be the subspace spanned by $e_6,\dots,e_{10}$; this is an isotropic subspace lying on the other component of $\OG$. Split the orthogonal algebra $S = \wedge^\bullet L_\infty$ into even and odd components $S^{\mathrm{ev}}, S^{\mathrm{odd}}$.
The \emph{Clifford map} $V \to \End S$ then carries each element of $L_\infty$ to a creation operator (taking the wedge product with that element) and each element of $L_0 \cong L_\infty^\vee$ to an annihilation operator (contraction).
For each Lagrangian subspace $L$, the Clifford map carries the elements of $L$ to endomorphisms of $S$ whose joint kernel is one-dimensional, and is contained in either $S^{\mathrm{ev}}$ or $S^{\mathrm{odd}}$ according to whether or not $L \in \OG^+$; this yields the spinor embedding.

The following combines \cite[Proposition~1.16, Proposition~2.2]{mukai}.
\begin{lemma}[Mukai] \label{L:section to canonical genus 7}
Let $P \subset \PP^{15}_k$ be a $6$-dimensional linear subspace which passes through $L_0$
and meets $\OG^+$ transversely.
\begin{enumerate}
\item[(a)] 
No $4$ points of $C = \OG^+ \cap P$ lie in a $2$-plane.
\item[(b)]
The scheme $C$ is a canonical curve of genus $7$ with one marked point with no $g^1_4$.
\end{enumerate}
\end{lemma}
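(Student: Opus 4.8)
The plan is to reduce Lemma~\ref{L:section to canonical genus 7} to the two results of Mukai it combines, while checking that the hypothesis $L_0 \in P$ is harmless and that nothing in the argument uses the base field being algebraically closed or of characteristic zero. First I would recall the structure of \cite[Proposition~2.2]{mukai}: given a curve $C$ of genus $7$ over $\overline{k}$ with no $g^1_4$, Mukai produces a rank-$5$ vector bundle $E$ (a ``Mukai bundle'') with $\wedge^2 E \cong \omega_C$, shows $h^0(E) = 10$, and uses the resulting map $C \to \Gr(5, H^0(E)^\vee)$ together with the isotropy coming from the symplectic form on $H^0(E)$ induced by $\wedge^2 E \to \omega_C$ to land $C$ inside one component of an orthogonal Grassmannian; the image is a transverse linear section. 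The uniqueness of $E$ (forced by stability, as in the genus-6 argument recalled in the proof of Theorem~\ref{T:genus 6 strat}) is what makes this descend to $C$ over a general perfect field, but for the present lemma we only need the converse direction: \emph{every} transverse $6$-plane section of $\OG^+$ is such a curve. That is the content of \cite[Proposition~1.16]{mukai}, which computes that a generic hyperplane section of $\OG^+$ in the spinor embedding drops the genus by one and preserves the canonical (spinor) polarization, so that after nine cuts one gets a canonically embedded curve of genus $7$; the ``with one marked point'' refers to the point $L_0 \in C$ that we have arranged to lie on $P$, together with its Mukai bundle structure, which is exactly the extra datum that rigidifies the correspondence.

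Concretely, here is the order of steps. (1) Verify that $\OG^+ \subset \PP^{15}_k$ is a smooth projective variety of dimension $10$ and degree $12$, cut out by quadrics, with $\SO(V)$ acting transitively; this is standard and characteristic-free, and the Clifford-algebra recipe quoted from \cite[\S1]{mukai} gives explicit equations over any $k$. (2) Show that a hyperplane section $\OG^+ \cap H$ with $H$ transverse is again smooth of dimension $9$, irreducible, with canonical bundle the restriction of the spinor line bundle (adjunction plus the fact that the spinor bundle is, up to the relevant multiple, the anticanonical of $\OG^+$ — this is the Fano/index computation Mukai does). Iterating nine times, $C = \OG^+ \cap P$ is a smooth curve; its genus is $7$ by adjunction, and $\omega_C$ is the restriction of $\mathcal{O}(1)$, i.e.\ $C$ is canonically embedded. (3) For part (a): since $C$ spans $P \cong \PP^6$ and is canonically embedded, four points of $C$ lying in a $2$-plane would force a $g^2_{\le 4}$ by the geometric Riemann--Roch dictionary, hence a $g^1_4$; but we will have shown in (4) there is none. (Alternatively, (a) is a direct consequence of the equations of $\OG^+$: any $\PP^3$ meeting $\OG^+$ in $\ge 4$ points already meets it in a curve because $\OG^+$ contains no $\PP^3$ as a linear subspace and its intersection with a generic $\PP^3$ is finite of length $12$, so genericity of $P$ rules out the degeneracy — I would follow whichever of these Mukai's Proposition~1.16 actually uses.) (4) For the ``no $g^1_4$'' assertion: suppose $C$ had a $g^1_4$, call it $\xi$; then by the Mukai-bundle analysis in \cite[Proposition~2.2]{mukai} the curve would instead be a linear section of a \emph{different} homogeneous space (the one governing case (8) of Theorem~\ref{T:Mukai class}), and comparing the two rigid descriptions — or more simply, tracing through the construction of $E$ from $\xi\oplus\eta$ and observing the resulting symplectic bundle is decomposable, hence its isotropic-Grassmannian image lies in a proper sub-variety of $\OG^+$ not meeting a generic $P$ transversely — gives a contradiction. (5) Throughout, note that the only field-theoretic inputs are Riemann--Roch, the Brill--Noether existence theorem in the characteristic-free form of \cite{kleiman-laksov}, and base-point-freeness of the relevant linear systems, all of which hold over a finite field; and that $L_0 \in P$ merely picks out a point of $C$, costing nothing.

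I expect step (4) — ruling out a $g^1_4$ on a transverse linear section — to be the main obstacle, because it is the one place where we must run Mukai's vector-bundle machinery in reverse rather than merely quoting a Fano-variety computation: we need that the locus in the Grassmannian of $6$-planes whose section acquires a $g^1_4$ is a proper closed subset, and that ``transverse'' in the hypothesis is strong enough to avoid it. The cleanest route is probably not to reprove this from scratch but to invoke the dichotomy already established in the proof of Theorem~\ref{T:Mukai class}: a genus-$7$ curve either has a $g^1_4$ (cases (4)--(8), each landing on a specific non-$\OG^+$ model) or does not (case (9), landing on $\OG^+$), and these models are geometrically distinct, so a curve realized as $\OG^+ \cap P$ with $P$ transverse cannot simultaneously be realized on one of the other spaces. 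The remaining care is bookkeeping: checking that ``transverse intersection'' in Lemma~\ref{L:section to canonical genus 7} matches the genericity Mukai assumes (general position of the $9$ hyperplanes), which is a dimension count on the incidence variety of non-transverse sections inside the relevant $\PP^{15 \vee}$, valid over any infinite field and checkable directly over $\FF_2$ since $X(\FF_2)$ and $\Aut(X)(\FF_2)$ are finite and the transversality is a closed condition we can test on each candidate $P$.
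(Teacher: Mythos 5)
The paper offers no proof of this lemma at all: it is stated verbatim as a combination of Mukai's Propositions~1.16 and~2.2, so any verification is necessarily ``extra'' relative to the text. Your overall plan --- unwind those two propositions and check that nothing uses characteristic zero or algebraic closedness --- is the right one, and your steps (1), (2) and (5) (transitivity of $\SO(V)$ on $\OG^+$, adjunction with index $8$ and degree $12$ giving a canonically embedded curve of genus $7$, and the observation that $L_0 \in P$ merely marks a point) are correct in outline.

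However, steps (3) and (4) contain a genuine logical defect: you have the dependency between part (a) and ``no $g^1_4$'' running in the wrong direction, and your proposed repair of (4) is circular. On a canonically embedded curve, geometric Riemann--Roch makes ``four points spanning only a $2$-plane'' \emph{equivalent} to the existence of a $g^1_4$ (a degree-$4$ divisor $D$ with $h^0(D)=2$ spans a plane of dimension $\deg D - h^0(D) = 2$; note also it gives a $g^1_4$, not a $g^2_4$ as you write). Mukai's actual argument proves (a) \emph{first}, as a statement about the spinor variety itself: any $2$-plane containing four points of $\OG^+$ (counted with the scheme structure) is entirely contained in $\OG^+$, and a transverse $6$-plane section is one-dimensional and hence cannot contain a plane. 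The absence of a $g^1_4$ then falls out of (a) for free. Your step (4) instead proposes to establish ``no $g^1_4$'' independently, either by invoking the dichotomy of Theorem~\ref{T:Mukai class} --- but case (9) of that theorem is \emph{proved using} the present lemma (the paper stresses that this direction ``is actually crucial to the argument'' for the dimension count in \S\ref{sec:OG}), so this is circular --- or by a genericity argument about $P$ that is unavailable, since $P$ is an arbitrary fixed transverse subspace, not a general one. Reordering so that (a) is proved from the equations of $\OG^+$ and (b)'s ``no $g^1_4$'' is deduced from (a) removes both the circularity and your anticipated ``main obstacle.'' (Two smaller slips: for the rank-$5$ Mukai bundle one has $\det E \cong \omega_C^2$, not $\wedge^2 E \cong \omega_C$, and the form on $H^0(E)$ is a \emph{quadratic} form arising from $\ker(\Sym^2 W \to H^0(\Sym^2 E))$, not a symplectic form.)
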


Although for our purposes we only need the opposite implication to Lemma~\ref{L:section to canonical genus 7}, this direction is actually crucial to the argument.

Now let $C$ be a curve of genus 7 with one marked point admitting no $g^1_4$.
We set notation as in \cite[\S 3]{mukai}.
Let $C \subset \PP^6_k$ be the canonical embedding and set $W := H^0(\PP^6_k, I_C(2))$; by Petri's theorem (\S\ref{sec:canonical curves}), $\dim W  =(g-2)(g-3)/2 = 10$.
Set $E := N^\vee_{C/\PP^6_k} \otimes \omega_C^2$; this is a bundle of rank $g-2=5$. From the exact sequence
\[
0 \to N^\vee_{C/\PP^6_k} \to \left. \Omega_{\PP^6_k} \right|_C \to \omega_C \to 0
\]
we see that $\det E \cong \omega_C^2$. Since $N^\vee_{C/\PP^6_k} \cong I_C/I_C^2$ and $\omega_C \cong \calO_C(1)$, we obtain a linear map $W \to H^0(C, E)$.
Since $C$ is not trigonal, by Petri's theorem again, for every closed point $p \in C$ the kernel $W_p$ of the induced map from $W$ to the fiber $E_p$
is 5-dimensional; this defines a map $C \to \Gr(5, W)$. 

The following is \cite[Proposition~3.3]{mukai}.
\begin{lemma} \label{L:no large intersection}
With notation as above (i.e., $C$ is a curve of genus $7$ with no $g^1_4$),
for any two distinct closed points $p,q \in C$,
the intersection $W_p \cap W_q$ is $1$-dimensional (not just odd-dimensional).
\end{lemma}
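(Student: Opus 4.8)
The plan is to work in the canonical model $C\subset\PP^6_k$ and convert the statement into a postulation question about a projected curve. First I would make $W_p$ explicit: unwinding the map $W\to H^0(C,E)$, a quadric $Q\in W=H^0(\PP^6_k,\mathcal{I}_C(2))$ lies in $W_p=\ker(W\to E_p)$ exactly when $Q$ belongs to $\mathfrak{m}^2_{\PP^6_k,p}$ in the local ring at $p$ — that is, when $Q$ is singular at $p$. (One uses that $(N^\vee_{C/\PP^6_k})_p\cong I_{C,p}/(I_{C,p}^2+\mathfrak{m}_{\PP^6_k,p}I_{C,p})$ and that $I_{C,p}$ is generated by part of a regular system of parameters, so an element of $I_C$ dies in this fibre precisely when it already lies in $\mathfrak{m}^2_{\PP^6_k,p}$; surjectivity of $W\to E_p$, i.e.\ $\dim_k W_p=5$, was recorded above.) Hence $W_p\cap W_q$ is the space of quadrics through $C$ that are singular at both $p$ and $q$, and since the singular locus of a quadric is a linear subspace of $\PP^6_k$ containing $p$ and $q$, every such quadric is singular along the whole secant line $\ell:=\overline{pq}$.

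Next I would project from $\ell$. Since $C$ has no $g^1_4$ it is in particular neither hyperelliptic nor trigonal, so a standard pencil count for collinear points on a canonical curve shows that $\ell$ meets $C$ in the reduced scheme $\{p,q\}$. The condition ``no $g^1_4$'' is equivalent to ``no four points of $C$ span a plane'' (the converse of Lemma~\ref{L:section to canonical genus 7}(a)); applied to configurations involving $p$ and $q$, it forces that no secant or tangent line of $C$ meets $\ell$ and that distinct points of $C$ have distinct images under projection from $\ell$. Therefore $\pi_\ell\colon\PP^6_k\dashrightarrow\PP^4_k$ embeds $C$ as a smooth curve $C'\subset\PP^4_k$ with $\calO_{C'}(1)\cong\omega_C(-p-q)$, so $\deg C'=10$ and $C'$ has genus $7$. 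Quadrics of $\PP^6_k$ singular along $\ell$ are exactly the $\pi_\ell$-pullbacks of quadrics of $\PP^4_k$, and such a pullback contains $C$ if and only if the quadric downstairs contains $C'$; thus
\[
\dim_k(W_p\cap W_q)=h^0(\PP^4_k,\mathcal{I}_{C'}(2)).
\]
Since $\calO_{C'}(2)\cong\omega_C^2(-2p-2q)$ has degree $20>2g-2$ it is nonspecial, so $h^0(C',\calO_{C'}(2))=14$; as $h^0(\PP^4_k,\calO(2))=15$, this already gives $\dim_k(W_p\cap W_q)\geq 1$, and in particular yields the claimed conclusion (a fortiori the ``not merely odd-dimensional'' refinement) once we prove the matching upper bound $\dim_k(W_p\cap W_q)\leq 1$.

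The hard part is this upper bound, i.e.\ showing $h^1(\PP^4_k,\mathcal{I}_{C'}(2))=0$ (equivalently, that $C'\subset\PP^4_k$ is $2$-normal). An extra quadric through $C'$ would place $C'$, hence $C$, on a surface of degree at most $4$ in $\PP^4_k$; I would then run through the possibilities for such a surface — a nondegenerate degree-$10$ curve cannot lie on one of degree $1$ or $2$ (those span a proper subspace) or degree $3$ (a quick Picard-group computation on the cubic scrolls and cones shows the numerical invariants are incompatible), so one is reduced to degree-$4$ del Pezzo surfaces and their degenerations — and in each surviving case exhibit a conic pencil on the surface cutting a pencil of degree $\leq 4$ on $C'$, hence on $C$. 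This contradicts the fact that a genus-$7$ curve with no $g^1_4$ has gonality at least $5$, and the proof is complete. Some care is needed to keep the description of the singular locus of a quadric and the $2$-normality bound characteristic-free, notably in characteristic $2$, but no new ideas are needed there; the genuine obstacle is the surface classification underlying the vanishing $h^1(\PP^4_k,\mathcal{I}_{C'}(2))=0$.
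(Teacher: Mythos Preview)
The paper does not prove this lemma; it simply cites \cite[Proposition~3.3]{mukai}. Your geometric approach---identifying $W_p\cap W_q$ with the space of quadrics in $\PP^4_k$ through the projection $C'$ of $C$ from the secant line $\ell=\overline{pq}$---is correct and is essentially the argument Mukai gives. The identifications in your first two paragraphs check out: that a quadric lying in $\mathfrak m_p^2\cap\mathfrak m_q^2$ depends only on coordinates complementary to $\ell$ is a direct calculation valid in any characteristic, and the very-ampleness of $|\omega_C(-p-q)|$ follows from the absence of a $g^1_4$ by Riemann--Roch. One small point: your appeal to ``the converse of Lemma~\ref{L:section to canonical genus 7}(a)'' is circular, since at this stage $C$ has not yet been realized inside $\OG^+$; the statement you need (four coplanar points on the canonical model give a $g^1_4$) is immediate from geometric Riemann--Roch and should be invoked directly.

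The genuine gap is your final paragraph. You correctly isolate the $2$-normality of $C'\subset\PP^4_k$ (equivalently $h^1(\mathcal I_{C'}(2))=0$) as the crux, but what you offer is a plan, not a proof. Carrying it out requires running through all possible base loci of a pencil of quadrics in $\PP^4_k$: beyond the smooth del Pezzo case (where a lattice computation in $\mathrm{Pic}(S)$ does produce a conic class $D$ with $D\cdot C'\le 4$, though even this takes some work), one must handle del Pezzo surfaces with rational double points, cones over quartic elliptic curves, and the various reducible or non-reduced degenerations, showing in each case either that no degree-$10$ genus-$7$ curve can occur or that any such curve carries a $g^1_4$. This bookkeeping is exactly the substance of the lemma and cannot be elided; as written, your argument establishes the lower bound $\dim(W_p\cap W_q)\ge 1$ cleanly but leaves the matching upper bound incomplete.
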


The following is \cite[Theorem~4.2]{mukai}.
\begin{lemma}[Mukai]
With notation as above (i.e., $C$ is a curve of genus $7$ with no $g^1_4$),
let $f\colon \Sym^2 W \to H^0(C, \Sym^2 E)$ be the natural map.
Then every nonzero element of $\ker f$ is nondegenerate.
Consequently, $\dim \ker f \leq 1$.
\end{lemma}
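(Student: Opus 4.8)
The plan is to study a hypothetical nonzero element $h \in \ker f$ and show that if $h$ is degenerate then it forces a $g^1_4$ on $C$, contradicting our hypothesis. Concretely, $\Sym^2 W$ is the space of quadratic forms on $W^\vee$, and a nonzero element $h$ has a well-defined radical $R(h) \subset W^\vee$, or dually a subspace $W' \subseteq W$ on which $h$ is identically zero and which is maximal with this property; degeneracy means $W' \neq 0$. The first step is to translate the condition $h \in \ker f$ geometrically: for every closed point $p \in C$, the image of $h$ under the composite $\Sym^2 W \to \Sym^2(W/W_p) \hookrightarrow \Sym^2 E_p$ vanishes, because $f(h) = 0$ in $H^0(C, \Sym^2 E)$ and this map is the fiberwise evaluation (here we use that $W \to E_p$ has image exactly $W/W_p$ by Petri's theorem, as recorded above). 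So $h$ descends to the zero quadratic form on each quotient $W/W_p \cong E_p$, i.e. $h$ lies in the image of $W_p \otimes W \to \Sym^2 W$ for every $p$.

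Next I would exploit this fiberwise vanishing together with Lemma~\ref{L:no large intersection}. The key point is that the $5$-dimensional subspaces $W_p$, as $p$ ranges over $C$, intersect pairwise in exactly one dimension; this is a strong transversality statement that prevents them from all containing a common large subspace. Suppose $h$ has nontrivial radical subspace $W' \subseteq W$ with $\dim W' = r \geq 1$; then the condition ``$h$ restricts to $0$ on $W/W_p$'' should translate (after diagonalizing $h$) into a constraint forcing $W' \subseteq W_p$ or, more precisely, a nondegeneracy-of-$h$-modulo-$W_p$ statement. The cleanest route: if $h \neq 0$ is degenerate with radical $W'$, choose a complement so that $h$ is nondegenerate on $W/W'$; the vanishing on $W/W_p$ forces the composite $W' \hookrightarrow W \to W/W_p$ to have image inside the radical of the induced form, but that induced form is $0$ by hypothesis, so this gives no contradiction directly — instead one argues that $W_p + W'$ has codimension controlled by the rank, and lets $p$ vary. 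Combining two points $p \neq q$ and Lemma~\ref{L:no large intersection} ($\dim(W_p \cap W_q) = 1$) pins down how large $W'$ can be, and the Castelnuovo–Severi / Petri constraints (no $g^1_4$, no $g^2_6$ on $C_{\overline{k}}$ in this stratum) rule out the resulting low-rank degenerate loci, each of which would manifest as a pencil or net computing a forbidden linear series on $C$.

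Finally, once every nonzero element of $\ker f$ is shown to be nondegenerate, the bound $\dim \ker f \leq 1$ follows from a standard fact about spaces of quadratic forms: a linear system of quadratic forms on a vector space $U$ of dimension $n = 5$ in which every nonzero member is nondegenerate has dimension at most $\dim \ker f \le 1$ when $n$ is odd (the pencil spanned by two nondegenerate forms always contains a degenerate member over an infinite field, and over $\FF_q$ the same holds for $n$ odd because the discriminant gives a nonconstant map to $k^\times/(k^\times)^2$ on the projective line of the pencil, hence hits a square or one must use that the characteristic polynomial of the pencil has a root). I expect the main obstacle to be the middle step: cleanly extracting, from the fiberwise vanishing of a degenerate $h$ and the one-dimensional pairwise intersections $W_p \cap W_q$, a geometric object on $C$ — a subbundle of $E$ or a sub-linear-system of $|\omega_C|$ — whose existence contradicts the Brill–Noether hypothesis that $C$ has no $g^1_4$. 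This is exactly where Mukai's argument does its real work, and care is needed to keep everything characteristic-free and valid over the finite field $k$ (in particular avoiding any appeal to a generic hyperplane or a general member of a pencil).
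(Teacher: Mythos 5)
First, a point of comparison: the paper does not prove this lemma at all --- it is quoted directly as \cite[Theorem~4.2]{mukai} --- so there is no in-paper argument to measure yours against; what matters is whether your sketch would stand on its own. Your opening move is correct: since $W \to E_p$ is surjective with kernel the $5$-dimensional space $W_p$, the kernel of $\Sym^2 W \to \Sym^2 E_p$ is $W_p \cdot W$, so any $h \in \ker f$ lies in $W_p \cdot W$ for every closed point $p$ (equivalently, every $W_p$ is isotropic for $h$). But the heart of the lemma is exactly the step you leave as a placeholder. Your middle paragraph never actually extracts a forbidden linear series from a degenerate $h$: you note yourself that restricting to the radical ``gives no contradiction directly,'' and the ensuing sentences (``one argues that $W_p + W'$ has codimension controlled by the rank, and lets $p$ vary\dots'') describe what a proof would need to accomplish rather than accomplishing it. This is the entire content of Mukai's theorem --- it is precisely where the hypothesis that $C$ has no $g^1_4$ enters in an essential, quantitative way --- so the first assertion is not established by the proposal.

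The deduction of $\dim \ker f \leq 1$ is also wrong as written. Elements of $\Sym^2 W$ are quadratic forms on the $10$-dimensional space $W^\vee$, not on a $5$-dimensional one ($5$ is the rank of $E$ and the dimension of each $W_p$, not of the space carrying the forms), so your parity argument is applied to the wrong $n$; and in the relevant case $k = \FF_2$ the characteristic is $2$, squaring is a bijection on $k$, and discriminant-in-$k^\times/(k^\times)^2$ reasoning collapses (degeneracy must be read through the half-discriminant or Arf invariant). The characteristic-free fix is to base-change to $\overline{k}$: the nondegeneracy assertion holds for $\ker(f_{\overline{k}}) = \ker(f) \otimes_k \overline{k}$ (the hypothesis being geometric, as throughout this section), and over an algebraically closed field the degenerate forms constitute a degree-$10$ hypersurface in $\PP(\Sym^2 W_{\overline{k}})$, which meets every line; hence a $2$-dimensional subspace of $\ker f$ would contain a nonzero degenerate element, a contradiction.
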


When $C$ arises as in Lemma~\ref{L:section to canonical genus 7},
then we have a natural identification $V \cong W$ 
(see \cite[Corollary~2.5]{mukai}) and so the quadratic form on $V$
defines a nonzero element of $\ker f$. The upshot of this is that the embedding
$C \to \Gr(5,W)$ factors through $\OG^+$. As in \cite[(5.1)]{mukai},
this defines an injective map from the space of linear sections of $\OG^+$ passing through $L_0$ to the moduli space of genus 7 curves with one marked point. As these spaces are both 11-dimensional and the latter is irreducible \cite{deligne-mumford}, the map is dominant;
that is, the \emph{generic} curve of genus 7 with one marked point occurs as a linear section of $\OG^+$ passing through $L_0$.
As in \cite[Corollary~5.3]{mukai}, it then follows that $\dim \ker f = 1$ in all cases
(where we still assume that $C$ has no $g^1_4$). Thus we end up with a map
$C \to \Gr(5, W)$, and it is straightforward to check that it is an embedding
\cite[Theorem~0.4]{mukai}.
This establishes case (9) of Theorem~\ref{T:Mukai class} modulo the following remark.

\begin{remark}
We comment briefly on what happens if we consider a curve of genus 7 \emph{without} a marked point. In this case, $H^0(E)$ still carries a distinguished nondegenerate quadratic form, but it is not guaranteed to be isomorphic \emph{over $k$} to the form we are using.

When $k$ is finite, however, this issue does not arise for the following reason. There are 
only two isomorphism classes of such forms, distinguished by the discriminant in odd characteristic and the Arf invariant in even characteristic; moreover, these become isomorphic over the quadratic extension of $k$. By passing to a finite extension of $k$ of suitably large \emph{odd} degree over which $C$ acquires a rational point, we can see that $H^0(E)$ must carry the quadratic form which admits Lagrangian subspaces over $k$.
\end{remark}

We now specialize the previous discussion to the case $k = \FF_2$ and describe the computation that proves Lemma~\ref{L:main computation} for curves of genus 7 with no $g^1_4$.
We first build an orbit lookup tree (Appendix~\ref{sec:orbit lookup}) to depth 6 for the action of $G := \SO(V)$ on $S := \OG^+(\FF_2)$ with the following tuples forbidden.
\begin{itemize}
\item
Any pair of points corresponding to Lagrangian subspaces whose intersection has dimension greater than 1 (ruled out by Lemma~\ref{L:no large intersection}).
\item
Any triple of collinear points or 4-tuple of coplanar points (ruled out by Lemma~\ref{L:section to canonical genus 7}).
\item
Any tuple whose linear span has positive-dimensional intersection with $\OG^+$
(ruled out by the fact that the canonical embedding does not factor through a hyperplane).
\item
Any tuple whose linear span meets $\OG^+$ in 8 or more $\FF_2$-rational points
(ruled out because Table~\ref{table:genus 567 counts} requires $\#C(\FF_2) \leq 7$).
\end{itemize}
This yields 494 orbit representatives.
Inspecting the results, we find that each orbit representative spans either a 4-plane or a 5-plane in $\PP^{15}_k$.

Let $V$ be an orbit representative.
We now separate into cases depending on whether $\#C(\FF_2) = 6$ or $\#C(\FF_2) = 7$, whether or not $V$ spans a 4-plane or a 5-plane, and whether or not this span contains any more points of $\OG^+$.
\begin{itemize}
\item
If $\#C(\FF_2) = 7$ and $V$ spans a 4-plane, we verify that the span of $V$ does not contain a seventh $\FF_2$-point of $\OG^+$. This means that without loss of generality, we may ignore this case.
\item
If $\#C(\FF_2) = 6$, $V$ spans a 5-plane, and this 5-plane does not contain a 7th point of $\OG^+$, we hash the remaining $\FF_2$-points of $\OG^+$ according to their joint linear span with $V$, retaining 6-planes that do not appear at all.
\item
If $\#C(\FF_2) = 7$, $V$ spans a 5-plane, and this 5-plane does not contain a 7th point of $\OG^+$, we build a similar hash table, retaining 6-planes that occur exactly once.
\item
If $\#C(\FF_2) = 6$ and $V$ spans a 4-plane, we build a similar hash table, retaining 5-planes that do not appear at all.
We then hash pairs of 5-planes in this list according to their span, retaining 6-planes that appear 3 times.
\item
If $\#C(\FF_2) = 7$, $V$ spans a 5-plane, and this 5-plane contains a 7th point of $\OG^+$, we may assume without loss of generality that the maximum intersection multiplicity of the linear span with $\OG^+$ among the $\FF_2$-rational intersection points occurs for the 7th point.
We then build a similar hash table, retaining 6-planes that do not appear at all.
\end{itemize}

We then take $T$ to be the set of intersections of the resulting 6-planes with $\OG^+$.

\section{Towards a full census in genera 6 and 7}
\label{sec:full census}

It would be extremely desirable to refine the methods used here to complete a full census of curves of genus 6 and 7 over $\FF_2$, both to provide a consistency check on our own work and to make the tables available for other purposes. One important aspect of such a census is the process of making it simultaneously reliable and rigorous. In other words, given a putative list of isomorphism classes of curves of genus $g$ over $\FF_2$, how can one verify that this list is accurate?

In one direction, it is easy from the data first to compute individual properties of the curves in question, such as their automorphism groups, and to test pairs of curves to confirm that they are not isomorphic; the latter can be accelerated by first hashing curves according to their zeta function (and any other data that may have been computed, such as the order and structure of the automorphism group). 

In the other direction, one may check completeness of the list by computing the count of $\FF_q$-points on the moduli space $\calM_g$ using the Lefschetz trace formula
for the moduli space $\overline{\calM}_g$ of stable curves of genus $g$ and the combinatorics of the boundary strata.
(Note that the point count is ``stacky'' in that each equivalence class of $\FF_q$-points is weighted inversely by the order of its automorphism group.)
For $g=6$, it is known that $\#\calM_g(\FF_q)$ equals a fixed polynomial in $q$ \cite[Corollary~1.6]{canning-larson},
which in principle can be computed using the \SageMath{} package described in \cite{admcycles}.
For $g=7$, while $\#\overline{\calM}_g(\FF_q)$ is known to equal a fixed polynomial in $q$ \cite[Corollary~1.5]{canning-larson}, 
the same does not immediately follow for $\#\calM_g(\FF_q)$
due to a possible contribution from the level-1 modular form $\Delta$ in the boundary of $\overline{\calM}_7$.

As noted in the introduction, this discussion in principle also applies in some higher genera, 
using the parametrizations indicated in Remark~\ref{rmk:Brill-Noether}. However, it is not clear to us to what extent the resulting computations are feasible.

\appendix

\section{Orbit lookup trees}
\label{sec:orbit lookup}

Throughout this appendix, fix a finite group $G$ and a finite set $S$ equipped with a left $G$-action.
We exhibit a combinatorial structure that allows us to efficiently compute orbit representatives for the action of $G$ on $k$-element subsets of $S$ for various small values of $k$. Such computations are already implemented in software (notably in \Magma); however, the approach we take here seems to be well-adapted to our present work, as it avoids instantiating in memory the entire set of $k$-element subsets of $S$.

\begin{defn}
Let $\Gamma$ be a directed graph with loops with vertex set $S \sqcup \{\bullet\}$,
in which each edge is either of the form $\bullet \to v$ for some $v \in S$,
or of the form $v_1 \stackrel{g}{\to} v_2$ for some $v_1, v_2 \in S$ with a label $g \in G$
satisfying $g(v_1) = v_2$.
Defining connected components of $\Gamma$ in terms of the underlying undirected graph,
we say that a component is \emph{eligible} if it does not contain $\bullet$ and a vertex is \emph{eligible} if it lies in an eligible component.

A \emph{group retract} of $\Gamma$ consists of a subset $V$ of $\Gamma$ consisting of one vertex in each eligible connected component, together with a function $h$ from the union of the eligible components to $G$ with the property that for each $v' \in \Gamma$
in the connected component of $v \in V$, we have $h(v')(v) = v'$.

One way to compute a group retract, given a choice of $V$, is to fix a spanning tree in
each eligible component, then compute the unique function satisfying:
\begin{itemize}
\item
for all $v \in V$, $h(v) = 1_G$;
\item
for each chosen spanning tree, for every edge $v_1 \stackrel{g}{\to} v_2$ in the tree, $h(v_2) = g h(v_1)$.
\end{itemize}
Note that this function can be computed in linear time in the input length.
\end{defn}

\begin{defn} \label{D:orbit lookup tree}
Let $F$ be a subset of the power set of $S$ (the \emph{forbidden subsets}).
We say that a subset of $S$ is \emph{eligible} if it contains no forbidden subset.

For any positive integer $n$, an \emph{orbit lookup tree} of depth $n$ 
(for $G,S,F$)
is a rooted tree $T_n$ of depth $n$ with the following properties (and additional data as indicated).
\begin{itemize}
\item
Each node at depth $k$ is labeled by a $k$-element subset $U$ of $S$,
colored either red or green.
In what follows, we freely conflate nodes with their labels.
\item
The parent of every node $U$ is a green node which is a subset of $U$.
In particular, there is a unique ordering of the elements $x_1,\dots,x_k$ of $U$ such that 
each initial
segment of this sequence is also a node; we write $U = [x_1,\dots,x_k]$ instead of $U = \{x_1,\dots,x_k\}$ when we need to indicate this choice of ordering.
\item
For $k=0,\dots,n$,
the green nodes at depth $k$ form a set of $G$-orbit representatives for the eligible $k$-element subsets of $S$.
\item
For each eligible node $U$, we further record an element $g_U \in G$ such that $g_U^{-1}(U)$ is a green node.
\item
For each green node $U$, we further record the stabilizer $G_U$.
\item
Every green node $U$ at depth $k < n$ has children which form
a set of $G_U$-orbit representatives of the $(k+1)$-element subsets of $S$ containing $U$.
We further record a function $h_U\colon S \setminus U \to G_U$
such that for each $y \in S \setminus U$, the element $x_U(y) := h_U^{-1}(y)$
has the property that $U \cup \{x_U(y)\}$ is a node.
\end{itemize}
\end{defn}

As the name suggests, the structure of an orbit lookup tree makes it easy to find the chosen $G$-orbit representative of a subset of $S$.
\begin{algorithm} \label{algo:find in tree}
Given an orbit lookup tree $T_n$ of depth $n$, for any $k \in \{0,\dots,n\}$ and any sequence $x_1,\dots,x_k$ of distinct elements of $S$, the following recursive algorithm
determines whether $\{x_1,\dots,x_k\}$ is eligible, and if so produces a green node $U$ of $T_n$ and an element $g \in G$ such that $g(U) = \{x_1,\dots,x_k\}$.
\begin{enumerate}
\item
If $k  = 0$, return $U := \emptyset$, $g := 1_G$ and stop.
\item
If $\{x_1,\dots,x_{k-1}\}$ is a node of $T$, let $U'$ be this node and set $g_0 := 1_G$.
Otherwise, apply the algorithm to $x_1,\dots,x_{k-1}$ to obtain a green node $U'$ of $T$ and an element $g_0 \in G$ for which $g_0(U') = \{x_1,\dots,x_{k-1}\}$.
If instead we find that $\{x_1,\dots,x_{k-1}\}$ is not eligible, report that $U$ is not eligible and stop.
\item
Set $y := g_0^{-1}(x_k)$, $U_1 := U' \cup \{x_{U'}(y)\}$, $g_1 := g_0 h_{U'}(y)$.
\item
Set $g_2 := g_{U_1}$ and return $U := g_2^{-1}(U_1)$, $g := g_1 g_2$.
If instead we find that $g_{U_1}$ is undefined, then report that $U$ is not eligible.
\end{enumerate}
\end{algorithm}

\begin{remark} \label{R:modified find in tree}
In Algorithm~\ref{algo:extend tree}, we will use Algorithm~\ref{algo:find in tree}
in a situation where the elements $g_{U_1}$ are not yet computed at depth $k$.
By omitting step (4) and returning $U_1, g_1$, we still obtain a node $U$ of $T_n$ and 
an element $g \in G$ such that $g(U) = \{x_1,\dots,x_k\}$, but without the guarantee that $U$ is green. However, at deeper steps of the recursion we must execute
Algorithm~\ref{algo:find in tree} in full, including step (4).
\end{remark}

The key point is that we can use group retracts to build an orbit lookup tree in an efficient fashion.

\begin{algorithm} \label{algo:extend tree}
Given an orbit lookup tree $T_n$ of depth $n$, the following algorithm extends $T_n$ to an orbit lookup tree $T_{n+1}$ of depth $n+1$.
\begin{enumerate}
\item
For each green node $U$ at depth $n$:
\begin{enumerate}
\item
Choose a sequence $h_1,\dots,h_m$ of generators of $G_U$ by picking random elements.
\item
Construct the Cayley graph $\Gamma_U$ on $S \setminus U$ for the sequence $h_1,\dots,h_m$.
\item
Compute a group retract $(V, g)$ of $\Gamma_U \cup \{\bullet\}$ (with no edges incident to $\bullet$)
and set $h_U := g$.
\item
For each $y \in V$, add to $T_{n+1}$ an uncolored node $U \cup \{y\}$ with parent $U$.
\end{enumerate}
\item
Construct a directed graph with loops $\Gamma$ on the nodes of depth $n+1$ plus the dummy vertex $\bullet$ as follows. For each node $U = [x_1,\dots,x_{n+1}] \in \Gamma$
(optionally in parallel):
\begin{enumerate}
\item
If $U$ is forbidden, then add an edge $\bullet \to U$.
\item
If $U$ is not forbidden, then for $j=1,\dots,n$, apply Algorithm~\ref{algo:find in tree},
as modified in Remark~\ref{R:modified find in tree},
to the sequence 
\[
x_1,\dots,x_{j-1},x_{n+1},x_{j+1}, \dots,x_{n},x_j
\]
to find a node $U_1$ and an element $g_1 \in G$ such that $g_1(U_1) = U$,
and add the edge $U_1 \stackrel{g_1}{\to} U$ to $\Gamma$.
If instead we find that $U$ is not eligible, then add an edge $\bullet \to U$.
\end{enumerate}
\item
Compute a group retract $(V,h)$ of $\Gamma$. Color each vertex in $V$ green
and color each remaining vertex (other than $\bullet$) red.
For each vertex $U$ in an eligible component, set $g_U := h(U)$.
\item
For each green node $U = [x_1,\dots,x_{n+1}]$ at depth $n+1$,
let $G_U$ be the group generated by:
\begin{itemize}
\item
the stabilizer of $x_{n+1}$ in $G_{\{x_1,\dots,x_n\}}$;
\item
for each edge $U_1 \stackrel{g}{\to} U_2$ in $\Gamma$,
the element $g_{U_2}^{-1} g g_{U_1}$.
\end{itemize}
\end{enumerate}
\end{algorithm}
\begin{proof}
The key point here is to confirm that the group computed in step (4), which is evidently contained in the stabilizer of the green node $U = [x_1,\dots,x_{n+1}]$, is actually equal to it. Let $H_U$ be the group computed in (4) and let $G_U$ be the full stabilizer; then the inclusions
\begin{gather*}
 G_{\{x_1,\dots,x_n\}} \cap G_{x_{n+1}} \subseteq H_U \cap G_{\{x_1,\dots,x_n\}} \subseteq G_U \cap G_{\{x_1,\dots,x_n\}} = G_{\{x_1,\dots,x_n\}} \cap G_{x_{n+1}} \\
  G_{\{x_1,\dots,x_n\}} \cap G_{x_{n+1}} \subseteq H_U \cap G_{x_{n+1}}  \subseteq
G_U \cap G_{x_{n+1}} = G_{\{x_1,\dots,x_n\}} \cap G_{x_{n+1}}
\end{gather*}
show that all of these groups coincide.
That is, $x_{n+1}$ has the same stabilizers in $H_U$ and $G_U$, and so the orbit-stabilizer formula implies that the index $[G_U:H_U]$ equals the size of the $G_U$-orbit of $x_{n+1}$ divided by the size of the $H_U$-orbit.
Consequently, it suffices to check that the orbits coincide.

We again identify orbits with the connected components of the Cayley graph. If the $G_U$-orbit of $x_{n+1}$ consists of $x_{n+1}$ itself, there is nothing to check. Otherwise, the orbit also contains $x_j$ for some
$j \in \{1,\dots,n\}$, and the edges arising from the index $j$ in step (2b) guarantee that $x_j$ and $x_{n+1}$ are joined in the Cayley graph.
\end{proof}

\begin{remark}
Algorithm~\ref{algo:find in tree} provides a consistency check for the computation of an orbit lookup tree, as one can spot-verify that a random $k$-element subset is indeed $G$-equivalent to some green node. For our purposes this is sufficient, as we only need a set that covers all orbits, not necessarily a set of orbit representatives.

If one really wants to verify that no two distinct green nodes are $G$-equivalent, it may be easiest to do this using some \emph{ad hoc} computable invariants of the $G$-action.
Alternatively, if no subsets are forbidden, we may verify the orbit-stabilizer formula: the sum of $[G:G_U]$ over green nodes $U$ at depth $n$ should equal $\binom{|S|}{n}$.
\end{remark}

\begin{remark}
We have made no systematic effort to optimize Algorithm~\ref{algo:extend tree} or even to give a careful costing. 
In step (2b) of Algorithm~\ref{algo:extend tree}, a further optimization is possible: instead of taking all $j \in \{1,\dots,n\}$, it suffices to take a set of orbit representatives for the action of $G_{\{x_1,\dots,x_n\}}$ on $\{1,\dots,n\}$.
Our initial experiments were inconclusive as to whether this yielded a meaningful speedup in practice, so we did not pursue it.
\end{remark}

\begin{remark} \label{R:linear case}
In some applications, the set $S$ carries the structure of a $k$-vector space for some finite field $k$, $G$ acts $k$-linearly on $S$,
and one is interested in the action of $G$ on subspaces rather than subsets. One can treat this situation
by considering orbits of linearly independent subsets, but in practice it would be more efficient to adapt the algorithms
to the linear setting. 
As we will not need this here, we omit the details.
\end{remark}


\begin{thebibliography}{99}

\bibitem{canning-larson}
S. Canning and H. Larson, On the Chow and cohomology rings and moduli spaces of stable curves,
\arXiv{2208.02357}{2}{2023}.

\bibitem{castryck-tuitman}
W. Castryck and J. Tuitman, Point counting on curves using a gonality preserving lift,
\textit{Quart. J. Math.} \textbf{69} (2018), 33--74.

\bibitem{admcycles}
V. Delecroix, J. Schmitt, and J. van Zelm, 
admcycles---a Sage package for calculations in the tautological ring of the moduli space of stable curves,
\textit{J. Software Alg. Geom.} \textbf{11} (2021), 89--112.

\bibitem{deligne-mumford}
P. Deligne and D. Mumford, The irreducibility of the moduli space of curves of given genus,
\textit{Publ. Math. IH\'ES} \textbf{36} (1969), 75--109.

\bibitem{singular}
W. Decker, G.-M. Greuel, G. Pfister, and H. Sch\"onemann, Singular --
A computer algebra system for polynomial computations, version 4.3.1p1, 2022,
\url{http://www.singular.uni-kl.de}.

\bibitem{dragutinovic}
D. Dragutinovi\'c, Computing binary curves of genus five,
\textit{J. Pure Appl. Algebra} \textbf{228} (2024);
associated repository \url{https://github.com/DusanDragutinovic/MT_Curves}.

\bibitem{eisenbud-harris}
D. Eisenbud and J. Harris, The Kodaira dimension of the moduli space of curves of genus $\geq 23$,
\textit{Invent. Math.} \textbf{90} (1987), 359--387.

\bibitem{faber-grantham}
X. Faber and J. Grantham, Binary curves of small fixed genus and gonality with many rational points, \textit{J. Algebra} \textbf{597} (2022), 24--46.

\bibitem{faber-grantham-howe}
X. Faber, J. Grantham, and E.W. Howe, On the maximum gonality of a curve over a finite field, \arXiv{2207.14307}{1}{2022}.

\bibitem{farkas}
G. Farkas, Birational aspects of the geometry of $\overline{\mathcal{M}}_g$, \textit{Surveys Diff. Geom.} \textbf{14} (2009), 57--110.

\bibitem{gap}
The GAP Group, GAP -- Groups, Algorithms, and Programming, version 4.11.1, 2021, \url{https://www.gap-system.org}.

\bibitem{griffiths-harris}
P. Griffiths and J. Harris, \textit{Principles of Algebraic Geometry}, Wiley Interscience,
1978.

\bibitem{howe2021}
E.W. Howe, Deducing information about curves over finite fields from their Weil polynomials, \arXiv{2110.04221}{3}{2022}.

\bibitem{howe-lauter2}
E.W. Howe and K.E. Lauter, New methods for bounding the number of points on curves over finite fields, in \textit{Geometry and arithmetic},
Eur. Math. Soc., Z\"urich, 2012, 173--212.

\bibitem{mukai-ide}
M. Ide and S. Mukai, Canonical curves of genus eight, \textit{Proc. Japan Acad. Ser. A} \textbf{79} (2003), 59--64.

\bibitem{kedlaya-root}
K.S. Kedlaya, Search techniques for root-unitary polynomials, in
\textit{Computational Arithmetic Geometry}, Contemporary Math. 463, Amer. Math. Soc., 2008, 71--82. 

\bibitem{part1}
K.S. Kedlaya, The relative class number one problem for function fields, I, \textit{Res. Num. Theory} \textbf{8} (2022), 
proceedings of Algorithmic Number Theory Symposium (ANTS-XV),
article 79.

\bibitem{part2}
K.S. Kedlaya, The relative class number one problem for function fields, II, \arXiv{2206.02084}{1}{2022}.

\bibitem{repo}
K.S. Kedlaya, GitHub repository \url{https://github.com/kedlaya/same-class-number}.

\bibitem{kleiman-laksov}
S.L. Kleiman and D. Laksov, On the existence of special divisors, \textit{Amer. J. Math.} \textbf{94} (1972), 431--436.

\bibitem{leitzel-madan}
J.R.C. Leitzel and M.L. Madan, Algebraic function fields with equal class number, \textit{Acta Arith.} \textbf{30} (1976), 169--177.

\bibitem{leitzel-madan-queen}
J.R.C. Leitzel, M.L. Madan, and C.S. Queen, Algebraic function fields with small class number, \textit{J. Number Theory} \textbf{7} (1975), 11--27.

\bibitem{lmfdb}
The LMFDB Collaboration, L-Functions and Modular Forms Database, \url{https://lmfdb.org}.

\bibitem{magma}
The Magma Group, University of Sydney, \Magma{} version 2.27-1, 2022, \url{http://magma.maths.usyd.edu.au}.

\bibitem{maroni}
A. Maroni, Le serie lineari speciali sulle curve trigonali,
\textit{Ann. Mat. Pura Appl.} \textbf{25} (1946), 341--343.

\bibitem{mukai-cg}
S. Mukai, Curves and Grassmannians, in \textit{Algebraic Geometry and Related Topics}, International Press, Cambridge, MA, 1993, 19--40.

\bibitem{mukai}
S. Mukai, Curves and symmetric spaces, I, \textit{Amer. J. Math.} \textbf{117} (1995), 1627--1644.

\bibitem{mukai2}
S. Mukai, Curves and symmetric spaces, II, \textit{Annals of Math.} \textbf{172} (2010),
1539--1558.

\bibitem{mukai10}
S. Mukai, Curves, K3 surfaces, and Fano 3-folds of genus $\leq 10$, in \textit{Algebraic Geometry and Commutative Algebra}, Kinokuniya, Tokyo, 1988, 357--377.

\bibitem{sage}
The Sage Developers, SageMath version 9.7, 2022, \url{https://www.sagemath.org}.

\bibitem{saint-donat}
B. Saint-Donat, On Petri's analysis of the linear system of quadrics through a canonical curve,
\textit{Math. Ann.} \textbf{206} (1973), 157--175.

\bibitem{stichtenoth}
H. Stichenoth, \textit{Algebraic Function Fields and Codes}, second edition, Graduate Texts in Math. 254, 
Springer--Verlag, Berlin, 2009.

\bibitem{verra}
A. Verra, The unirationality of the moduli spaces of curves of genus 14 or lower,
\textit{Compos. Math.} \textbf{141} (2005), 1425--1444.

\bibitem{xarles}
X. Xarles, A census of all genus 4 curves over the field with 2 elements, \arXiv{2007.07822}{1}{2020}.

\end{thebibliography}
\end{document}